\newtheorem{theorem}{Theorem}[section]
\newtheorem{lemma}[theorem]{Lemma}
\newtheorem{proposition}[theorem]{Proposition}
\newtheorem{question}[theorem]{Question}
\newtheorem{corollary}[theorem]{Corollary}
\theoremstyle{definition}
\newtheorem{definition}[theorem]{Definition}
\theoremstyle{remark}
\newtheorem{remark}[theorem]{Remark}
\numberwithin{equation}{section}
\newcommand{\frakc}{\mathfrak{c}}
\newcommand{\eps}{\varepsilon}
\newcommand{\N}{\mathbb{N}}
\newcommand{\R}{\mathbb{R}}
\newcommand{\G}{\mathbb{G}}
\newcommand{\aA}{\mathcal{A}}
\newcommand{\bB}{\mathcal{B}}
\newcommand{\cC}{\mathcal{C}}
\newcommand{\uU}{\mathcal{U}}
\newcommand{\zZ}{\mathcal{Z}}
\newcommand{\concat}{{^\smallfrown}}
\DeclareMathOperator{\der}{\mathrm{d}}
\newcommand{\ol}{\overline}
\newcommand{\rstr}{\restriction}
\newcommand{\sm}{\setminus}
\newcommand{\sub}{\subseteq}
\DeclareMathOperator{\supp}{supp}
\newcommand{\seq}[2]{\big\langle#1\colon\ #2\big\rangle}
\newcommand{\seqn}[1]{\big\langle#1\colon\ n\io\big\rangle}
\newcommand{\seqk}[1]{\big\langle#1\colon\ k\io\big\rangle}
\newcommand{\seqi}[1]{\big\langle#1\colon\ i\io\big\rangle}
\newcommand{\finsub}[1]{\left[#1\right]^{<\omega}}
\newcommand{\io}{\in\omega}
\newcommand{\bo}{{\beta\omega}}
\newcommand{\fso}{\finsub{\omega}}
\newcommand{\Cantor}{2^\omega}
\newcommand{\noproof}{\hfill$\Box$}
\begin{document}


\baselineskip=17pt


\title[Grothendieck $C(K)$-spaces and the J.--N. theorem]{Grothendieck $C(K)$-spaces and the Josefson--Nissenzweig theorem}
\author[J. K\k{a}kol]{Jerzy K\k{a}kol}
\address{Faculty of Mathematics and Computer Science, Adam Mickiewicz University, ul. Uniwersytetu Pozna\'{n}skiego 4, 61-614 Pozna\'{n}, Poland, and Institute of Mathematics, Czech Academy of Sciences, \v{Z}itn\'{a} 25, 115 67 Praha 1, Czech Republic.}
\email{kakol@amu.edu.pl}
\author[D.\ Sobota]{Damian Sobota}
\address{Kurt G\"{o}del Research Center, Faculty of Mathematics, University of Vienna, Kolingasse 14--16, 1090 Wien, Austria.}
\email{ein.damian.sobota@gmail.com}
\author[L. Zdomskyy]{Lyubomyr Zdomskyy}
\address{Institute of Discrete Mathematics and Geometry, TU Wien, Wiedner Hauptstra\ss e 8--10/104, 1040 Wien, Austria}
\email{lzdomsky@gmail.com}

\date{}

\begin{abstract}
For a compact space $K$, the Banach space $C(K)$ is said to have \textit{the $\ell_1$-Grothendieck property} if every weak* convergent sequence $\big\langle\mu_n\colon\ n\in\omega\big\rangle$ of functionals on $C(K)$ such that $\mu_n\in\ell_1(K)$ for every $n\in\omega$, is weakly convergent. Thus, the $\ell_1$-Grothendieck property is a weakening of the standard Grothendieck property for Banach spaces of continuous functions. We observe that $C(K)$ has the $\ell_1$-Grothendieck property if and only if there does not exist any sequence of functionals $\big\langle\mu_n\colon\ n\in\omega\big\rangle$ on $C(K)$, with $\mu_n\in\ell_1(K)$ for every $n\in\omega$, satisfying the conclusion of the classical Josefson--Nissenzweig theorem. We construct an example of a separable compact space $K$ such that $C(K)$ has the $\ell_1$-Grothendieck property but it does not have the Grothendieck property. We also show that for many classical consistent examples of Efimov spaces $K$ their Banach spaces $C(K)$ do not have the $\ell_1$-Grothendieck property.
\end{abstract}

\subjclass[2020]
{Primary: 
46E15, 
28A33. 
Secondary: 
28C15, 
46E27. 
}
\keywords{Josefson--Nissenzweig theorem, Grothendieck spaces, Grothendieck property, convergence of measures, weak topologies, inverse systems, Efimov spaces}

\maketitle

\section{Introduction}

A Banach space $E$ is \textit{a Grothendieck space} (or has \textit{the Grothendieck property}) if every weak* convergent sequence in the dual space $E^*$ is weakly convergent. If, for an infinite compact space $K$, the Banach space $C(K)$ of continuous real-valued functions on $K$ endowed with the supremum norm is Grothendieck, then let us also say that $K$ has \textit{the Grothendieck property}.

Many Banach spaces have been recognized as Grothendieck spaces, e.g. the space $\ell_\infty$ or more generally spaces $C(K)$ for extremely disconnected compact spaces $K$ (\cite{Gro53}), the space $H^\infty$ of bounded analytic functions on the unit disc (\cite{Bou83}), reflexive spaces, von Neumann algebras (\cite{Pfi94}), injective spaces, spaces $C(K)$ not containing complemented copies of $c_0$ (\cite{Cem84}), etc. On the other hand, if a compact space $K$ contains a non-trivial convergent sequence, then the Banach space $C(K)$ does not have the Grothendieck property (see \cite{MS22} for more general results). A full internal topological characterization of those compact spaces $K$ for which the spaces $C(K)$ are Grothendieck is however unknown (cf. \cite[Section 3]{Die73}).

Recall that the Josefson--Nissenzweig theorem asserts that for every infinite-dimensional Banach space $E$ there exists a sequence of normalized functionals in the dual space $E^*$ which is weak* convergent to $0$. The theorem constitutes a useful tool in studying the Grothendieck property of Banach spaces, especially those of the form $C(K)$, see e.g. \cite{Khu78}, \cite{Cem84}, \cite{Fre84}, \cite{KSZproc}. Note that in the case of $C(K)$-spaces, by the virtue of the Riesz representation theorem, the Josefson--Nissenzweig theorem simply states that every infinite compact space carries a sequence of normalized signed Radon measures which converges weak* to $0$ (see Section \ref{section:jn_ck} for the short proof of the theorem in this special case). Let us thus introduce the following convenient definition, where for a compact space $K$ by $\Delta(K)$ and $\ell_1(K)$ we mean the linear spaces of all Radon measures on $K$ with, respectively, finite supports and countable supports (cf. Section \ref{section:notation}).

\begin{definition}\label{def:jn_sequence}
A sequence $\seqn{\mu_n}$ of (signed) Radon measures on a compact space $K$ is \textit{a Josefson--Nissenzweig sequence} (in short, \textit{a JN-sequence}) on $K$ if it is weak* convergent to $0$ and $\big\|\mu_n\big\|=1$ for every $n\io$. If, in addition, $\mu_n\in\Delta(K)$ for each $n\io$, then we say that $\seqn{\mu_n}$ is \textit{a finitely supported Josefson--Nissenzweig sequence} (in short, \textit{an fsJN-sequence}). Similarly, if $\mu_n\in\ell_1(K)$ for each $n\io$, then $\seqn{\mu_n}$ is \textit{a countably supported Josefson--Nissenzweig sequence} (in short, \textit{a csJN-sequence}).
\end{definition}

Thus, the Josefson--Nissenzweig theorem asserts that all infinite compact spaces admit JN-sequences. On the other hand, not every infinite compact space admits an fsJN-sequence, e.g. it is proved in \cite{BKS19} that $\bo$, the \v{C}ech--Stone compactification of $\omega$, is such a space. In the first half of the paper we will study the relation between the Grothendieck property and the finitely supported Josefson--Nissenzweig property and generalize the latter result on $\bo$ by proving that no compact space $K$ with the Grothendieck property admits an fsJN-sequence or even a csJN-sequence.

We will need the following auxiliary definitions.

\begin{definition}\label{def:fsjnp}
A compact space $K$ has \textit{the finitely supported Josefson--Nissenzweig property} (in short, \textit{the fsJNP}) if $K$ admits an fsJN-sequence. Similarly, $K$ has \textit{the countably supported Josefson--Nissenzweig property} (in short, \textit{the csJNP)} if $K$ admits a csJN-sequence.
\end{definition}

\begin{definition}\label{def:ell1_gr}
For a compact space $K$, the Banach space $C(K)$ has \textit{the $\ell_1$-Grothendieck property}  (resp.
\textit{the $\Delta$-Grothendieck property}) if every weak*
convergent sequence of measures $\seqn{\mu_n\in\ell_1(K)}$ (resp.
$\seqn{\mu_n\in\Delta(K)}$) is weakly convergent. An infinite compact space $K$ has \textit{the $\ell_1$-Grothendieck property}  (resp.
\textit{the $\Delta$-Grothendieck property}) if the Banach space $C(K)$ has this property.
\end{definition}

Of course, if a compact space has the Grothendieck property, then it has the $\ell_1$-Grothendieck property, which further implies that it has the $\Delta$-Grothendieck property. By a routine computation and appealing to the Schur property of the space $\ell_1(K)$, it appears that the latter two properties are in fact the same as well as that they are equivalent to the lack of the finitely supported Josefson--Nissenzweig property.

\begin{restatable*}{theorem}{thmgrothfsjnp}\label{theorem:ell1_grothendieck_equiv_no_fsjnp}
Let $K$ be an infinite compact space. Then, the following are equivalent:
\begin{enumerate}
    \item $K$ has the $\ell_1$-Grothendieck property,
    \item $K$ has the $\Delta$-Grothendieck property,
    \item $K$ does not have the fsJNP,
    \item $K$ does not have the csJNP.
\end{enumerate}
\end{restatable*}

In the view of the aforementioned lack of any internal characterization of compact spaces with the Grothendieck property, it seems reasonable to ask whether the $\ell_1$-Grothendieck property is actually equivalent to the full Grothendieck property and hence whether the latter is equivalent to the lack of the fsJNP, too. It appears however that there exist compact spaces without the Grothendieck property but with the $\ell_1$-Grothendieck property.

\begin{restatable*}{theorem}{thmextrdiscgroth}\label{theorem:extr_disc_grothendieck}
For every infinite basically disconnected compact space $K$ there exists a compact space $L$ and a continuous surjection $\varphi\colon K\to L$ such that $L$ does not have the Grothendieck property but it has the $\ell_1$-Grothendieck property.
\end{restatable*}

\noindent (For the definition of a basically disconnected space, see Section \ref{section:notation}; recall here only that, e.g., every extremely disconnected space is basically disconnected.)

Setting $K=\bo$, we immediately obtain the following corollary.

\begin{restatable*}{corollary}{cornotgrellgr}\label{cor:notgr_ell1gr}
There exists a separable compact space $L$ such that it does not have the Grothendieck property but it has the $\ell_1$-Grothendieck property. \noproof
\end{restatable*}

The latter result complements Plebanek's construction of a (non-separable) compact space $K$ such that its every separable closed subspace has the Grothendieck property, yet $K$ itself does not have the property (\cite{Ple05}). (Note however that if every separable subspace of a given Banach space $E$ is contained in a Grothendieck subspace, then $E$ itself must be Grothendieck.)

\medskip

The second half of the paper is devoted to the study of the finitely supported Josefson--Nissenzweig property (or the $\ell_1$-Grothendieck property) of compact spaces representable as limits of inverse systems consisting of totally disconnected compact spaces. We are in particular interested in those systems in which all consecutive extensions are \textit{simple} (see Definition \ref{def:inv_sys_simple_ext}). We obtain the following general result.

\begin{restatable*}{theorem}{thminversefsjnp}\label{theorem:inverse_fsjnp}
Let $K$ be an infinite compact space. If $K$ is the limit of an inverse system of simple extensions, then $K$ has the fsJNP.
\end{restatable*}

Theorem \ref{theorem:inverse_fsjnp} has an immediate application to so-called Efimov spaces. Recall that \textit{the Efimov problem} is a long-standing open question asking whether every infinite compact space contains a non-trivial convergent sequence or a homeomorphic copy of $\bo$. An infinite compact space which contains neither of the mentioned subspaces is called \textit{an Efimov space}. Many examples of Efimov spaces have been obtained, but so far each of the constructions has made use of some additional set-theoretic assumption (see \cite[Section 6.2]{SZ19} for examples). Several of those examples are constructed as limits of inverse systems consisting of simple extensions, e.g. those of Fedorchuk (\cite{Fed76}, under the assumption of Jensen's Diamond Principle $\diamondsuit$), Dow and Pichardo-Mendoza (\cite{DPM09}, under the Continuum Hypothesis), or Dow and Shelah (\cite{DS13}, under Martin's axiom). Theorem \ref{theorem:inverse_fsjnp} implies that all of these examples have the fsJNP (Corollary \ref{cor:efimov_fsjnp}). 

We generalize Theorem \ref{theorem:inverse_fsjnp} to more complicated inverse systems, though of length at most $\frakc$ (see Definition \ref{def:tau_simple_extensions} and Theorem \ref{theorem:tau_simple_extensions_fsjnp}). As a result, based on constructions from \cite{dlV04}, \cite{dlV05}, and \cite{Back18}, we obtain that consistently there exist hereditarily separable Efimov spaces with various homogeneity properties satisfying the finitely supported Josefson--Nissenzweig property.

\begin{restatable*}{corollary}{cordelavegafsjnp}\label{cor:delavega_fsjnp}
Assume $\diamondsuit$.
\begin{enumerate}
    \item There exists a hereditarily separable totally disconnected rigid Efimov space satisfying the fsJNP.
    \item There exists a hereditarily separable totally disconnected Efimov space $K$ satisfying the fsJNP and such that any two non-empty clopen subsets of $F$ are homeomorphic.
\end{enumerate}
\end{restatable*}

\begin{restatable*}{corollary}{corbackefsjnp}\label{cor:backe_fsjnp}
Under $\diamondsuit$ there exists a hereditarily separable totally disconnected Efimov space $K$ satisfying the fsJNP and such that there are no disjoint infinite closed homeomorphic subspaces of $K$.
\end{restatable*}

Our results have some functional-analytic consequences. Namely, Cembranos \cite[Corollary 2]{Cem84} proved that for a compact space $K$ the space $C(K)$ is Grothendieck if and only if $C(K)$ does not contain any complemented copies of $c_0$, the Banach space of all sequences converging to $0$ endowed with the supremum norm. Banakh, K\k{a}kol, and \'Sliwa \cite[Theorem 1]{BKS19} proved that $K$ has the fsJNP if and only if the space $C_p(K)$ of continuous real-valued functions on $K$ endowed with the pointwise topology contains a complemented copy of the space $(c_0)_p$, that is, the space $c_0$ endowed with the product topology (inherited from $\mathbb{R}^\omega$). 
Consequently, by Theorem \ref{theorem:ell1_grothendieck_equiv_no_fsjnp} and Corollary \ref{cor:notgr_ell1gr}, we get the following two results.

\begin{restatable*}{corollary}{corellgrothcp}\label{cor:ell1_groth_c0p}
For a compact space $K$, the space $C(K)$ has the $\ell_1$-Grothendieck property if and only if the space $C_p(K)$ does not contain any complemented copies of $(c_0)_p$.
\end{restatable*}

\begin{restatable*}{corollary}{corexistscnotcp}\label{cor:exists_c0_not_c0p}
There exists a separable compact space $K$ such that $C(K)$ contains a complemented copy of $c_0$ but $C_p(K)$ does not contain any complemented copies of $(c_0)_p$.
\end{restatable*}

Note here that, by the Closed Graph Theorem, if a given space $C_p(K)$ contains a complemented copy of the space $(c_0)_p$, then $C(K)$ contains a complemented copy of $c_0$.

%

\section{Preliminaries and notation\label{section:notation}}

%
%
We use the following standard notions and symbols. If $X$ is a set and $A$ its subset, then $A^c=X\sm A$ and $\chi_A$ denotes the characteristic function of $A$ in $X$. The family of all subsets of $X$ is denoted by $\wp(X)$. The cardinality of a set $X$ is denoted by $|X|$. $\omega$ denotes the first infinite cardinal number and $\omega_1$ denotes the first uncountable cardinal number. The continuum, i.e. the size of the real line $\R$, is denoted either by $\frakc$ or $2^\omega$.

Throughout the paper we assume that all compact spaces are Hausdorff. The weight of a topological space $X$ is denoted by $w(X)$. If $X$ is a topological space and $A$ its subspace, then $\ol{A}^X$ denotes the closure of $A$ in $X$. We will often omit the superscript and write simply $\ol{A}$. $A^\circ$ and $\partial A$ denote the interior and the boundary of $A$ in $X$, respectively. $\beta X$ denotes the \v{C}ech--Stone compactification of $X$. The Cantor space will be usually denoted by $\Cantor$. We also usually identify $\omega$ with the space $\N$ of all natural numbers endowed with the discrete topology.

A subset $A$ of a topological space $X$ is \textit{a zero set} if there is a continuous function $f\colon X\to\R$ such that $A=f^{-1}(0)$, and it is \textit{a cozero set} if its complement $X\sm A$ is a zero set in $X$. Recall that a topological space $X$ is \textit{basically disconnected} if every cozero subset of $X$ has open closure, and it is \textit{extremely disconnected} if every open subset of $X$ has open closure. A compact space $K$ is \textit{totally disconnected} if it has a base consisting of clopen subsets. Of course, every extremely disconnected topological space is basically disconnected and every basically disconnected compact space is totally disconnected. Also, a compact space $K$ is extremely disconnected if and only if its Boolean algebra of clopen subsets is complete, and $K$ is basically disconnected if and only if this Boolean algebra is $\sigma$-complete.
%
%
%
%
%
%

If we say that \textit{$\mu$ is a measure on a compact space $K$}, then we mean that $\mu$ is a signed finite $\sigma$-additive measure defined on the Borel $\sigma$-field of $K$ and that $\mu$ is \textit{Radon}, i.e. both the positive and negative parts of $\mu$ are (outer and inner) regular and locally finite. We define \textit{the norm} $\|\mu\|$ of $\mu$ as
\[\|\mu\|=\sup\big\{|\mu(A)|+|\mu(B)|\colon\ A,B\sub X\text{ are Borel and disjoint}\big\}.\]
Since $K$ is compact, $\|\mu\|<\infty$. $|\mu|$ denotes \textit{the variation} of $\mu$---it follows that $|\mu|(K)=\|\mu\|$. Recall that due to the Riesz representation theorem the Banach space of all measures on $K$, endowed with the above norm, is isometrically isomorphic to the dual Banach space $C(K)^*$. For a function $f\in C(K)$ we simply set $\mu(f)=\int_Kfd\mu$.

A measure $\mu$ on a compact space $K$ is \textit{a probability measure} if $\mu(A)\ge0$ for every Borel $A$ and $\|\mu\|=1$. We say that $\mu$ \textit{vanishes at points} (or, is \textit{non-atomic}) if $\mu(\{x\})=0$ for every $x\in K$.

If $\mu$ is a measure on a compact space $K$, then by $\supp(\mu)$ we denote \textit{the support} of $\mu$, i.e. the smallest closed subset $L$ of $K$ such that for every open subset $U\sub K\sm L$ we have $|\mu|(U)=0$. We will say that $\mu$ is \textit{finitely (countably) supported} if $\supp(\mu)$ is a finite (countable) set. For a subset $S\sub K$, the spaces of all finitely supported measures on $K$ and all countably supported measures on $K$, whose supports are contained in $S$, are denoted by $\Delta(S)$ and $\ell_1(S)$, respectively. 
If $x\in K$, then by $\delta_x$ we mean \textit{the point measure} (or \textit{the Dirac measure}) concentrated at $x$ and defined as $\delta_x(A)=\chi_A(x)$, for every Borel $A$. Each element $\mu$ of $\ell_1(K)$ may be written as the sum  $\mu=\sum_{x\in\supp(\mu)}\alpha_x\cdot\delta_x$, 
for some non-zero $\alpha_x\in\R$ and every $x\in\supp(\mu)$. Similarly, the variation of $\mu$ may be written as $|\mu|=\sum_{x\in\supp(\mu)}\big|\alpha_x\big|\cdot\delta_x$, and thus the norm $\|\mu\|$ is equal to $\sum_{x\in\supp(\mu)}\big|\alpha_x\big|$. 
%
%
%

Recall that, for a compact space $K$, $\ell_1(K)$ is a complemented linear subspace of $C(K)^*$ and that it has \textit{the Schur property}, i.e. every weakly convergent sequence in $\ell_1(K)$ is also norm convergent. Note also that a sequence $\seqn{\mu_n}$ of measures on $K$ is \textit{weak* convergent} to a measure $\mu$ if $\lim_{n\to\infty}\mu_n(f)=\mu(f)$ for every $f\in C(K)$ (again by the virtue of the Riesz representation theorem), and that it is \textit{weakly convergent} to $\mu$ if $\lim_{n\to\infty}\mu_n(B)=\mu(B)$ for every Borel subset $B$ of $K$ (cf. \cite[Theorem 11, page 90]{Die84}).
%

\section{The Josefson--Nissenzweig theorem for $C(K)$-spaces\label{section:jn_ck}}

Josefson \cite{Jos75} and Nissenzweig \cite{Nis75} proved their theorem for general infinite-dimensional Banach spaces and both of the proofs are rather long, technical, and intricate. However, when we restrict our attention only to the Banach spaces of continuous functions on infinite compact spaces, then it appears that the theorem may be proved in a much easier way. Below we present one of such proofs, relying on measure-theoretic tools such as the Maharam theorem\footnote{Let us note here that another basic proof for the case of $C(K)$-spaces can be also easily extracted from the proof of the general Josefson--Nissenzweig theorem due to Behrends \cite{Beh94,Beh95}; his proof exploits Rosenthal's $\ell_1$-lemma and Banach limits, however, since $\ell_1$ embeds into $C(K)^*$, we may omit the application of the $\ell_1$-lemma and go directly to Case 2 of the proof.}. We also provide a sketch of the argument proving that if a compact space admits an fsJN-sequence, then it admits one with pairwise disjoint supports.


\subsection{A short measure-theoretic proof of the Josefson--Nissenzweig theorem for $C(K)$-spaces\label{section:plebanek_proof}}

We will prove that every infinite compact space admits a JN-sequence. Let thus $K$ be an infinite compact space. If $K$ is a scattered space, i.e. every subset of $K$ contains an isolated point in the inherited topology, then it is a simple folklore fact that $K$ contains a non-trivial sequence $\seqn{x_n}$ convergent to some point $x\in K$. A sequence $\seqn{\mu_n}$ of measures defined for each $n\io$ by the formula $\mu_n=\frac{1}{2}\big(\delta_{x_n}-\delta_x\big)$ is then a JN-sequence on $K$.

If $K$ is not scattered, then the proof requires more work. By \cite[Theorem 19.7.6]{Sem71}, there is a non-atomic probability measure $\mu$ on $K$. It follows from the celebrated Maharam theorem (\cite{Mah42}, see also \cite{Fre89}) that there exists a sequence $\seqn{B_n}$ of $\mu$-independent Borel subsets of $K$ such that $\mu\big(B_n\big)=1/2$ for every $n\io$. (The $\mu$-independence of $\seqn{B_n}$ means here that for every finite sequence $n_1,\ldots,n_k$ of distinct natural numbers and every sequence $\eps_1,\ldots,\eps_k\in\{-1,1\}$ we have:
\[\mu\Big(\bigcap_{i=1}^kB_{n_i}^{\eps_i}\Big)=\prod_{i=1}^k\mu\big(B_{n_i}^{\eps_i}\big)=1/2^k,\]
where $A^1=A$ and $A^{-1}=K\sm A$ for a subset $A$ of $K$.) For each $n\io$ define the measure $\mu_n$ as follows:
\[\mu_n(A)=\mu\big(B_n\cap A\big)-\mu\big(B_n^c\cap A\big),\]
where $A$ is a Borel subset of $K$; then, $\big\|\mu_n\big\|=1$. The sequence $\seqn{\mu_n}$ is a desired JN-sequence on $K$. Indeed, note that
\[\mu_n(g)=\int_Kg\cdot\big(\chi_{B_n}-\chi_{B_n^c}\big){\der}\mu\]
for every $n\io$ and $g\in L_1(\mu)$.\footnote{The symbols $L_1(\mu)$ and $L_\infty(\mu)$ denote the usual Banach spaces associated with the measure space $(K,Bor(K),\mu)$.} By the $\mu$-independence of the sequence $\seqn{B_n}$ and the generalized Riemann--Lebesgue lemma (\cite[Page 3]{Tal84Pettis}), the bounded sequence $\seqn{\chi_{B_n}-\chi_{B_n^c}}$ of functions in $L_\infty(\mu)$ has the property that
\[\lim_{n\to\infty}\int_Kg\cdot\big(\chi_{B_n}-\chi_{B_n^c}\big){\der}\mu=0\]
for every $g\in L_1(\mu)$, which implies that $\lim_{n\to\infty}\mu_n(g)=0$ for every $g\in C(K)$, too. The proof is thus finished.

\subsection{Disjointly supported fsJN-sequences\label{section:dsfsjnseq}}

We now present a brief sketch of the proof of the theorem asserting that if a compact space admits an fsJN-sequence, then it carries one with pairwise disjoint supports---for a detailed proof of a more general result, which does not require Kadets--Pe\l czy\'{n}ski--Rosenthal's Subsequence Splitting Lemma, we refer the reader to \cite[Section 4]{MSZ}. 

\begin{theorem}\label{theorem:disjointly_supported}
Let $K$ be a compact space with the fsJNP. Then, there exists an fsJN-sequence $\seqn{\theta_n}$ which is disjointly supported, i.e. $\supp\big(\theta_n\big)\cap\supp\big(\theta_k\big)=\emptyset$ for every $n\neq k\io$.
\end{theorem}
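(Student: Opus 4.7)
The plan is to pass to the countable support $S=\bigcup_n\supp(\mu_n)$, then apply the Kadets--Pe\l czy\'nski--Rosenthal Subsequence Splitting Lemma inside the isometric subspace $\ell_1(S)\hookrightarrow C(K)^*$, and finally pair up the resulting disjointly supported pieces to kill any residual weak$^*$ limit.

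After passing to a subsequence, the splitting lemma provides a decomposition $\mu_n=\alpha_n+\beta_n$ in $\ell_1(S)$ with $\alpha_n$ and $\beta_n$ having disjoint supports for each fixed $n$, the sequence $\seqn{\alpha_n}$ uniformly integrable (in the counting-measure sense on $S$), and $\seqn{\beta_n}$ pairwise disjointly supported across different $n$. By the Schur property of $\ell_1(S)$, uniform integrability coincides with relative norm compactness, so after a further subsequence $\alpha_n\to\alpha$ in $\ell_1(S)$-norm for some $\alpha\in\ell_1(S)$; within-$n$ disjointness gives $\|\alpha_n\|+\|\beta_n\|=\|\mu_n\|=1$.

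Next I would argue that $\|\alpha\|<1$. If $\|\alpha\|=1$, then $\|\beta_n\|\to 0$, so $\mu_n\to\alpha$ in the norm of $C(K)^*$ and hence weak$^*$; combined with $\mu_n\to 0$ weak$^*$, this forces $\alpha=0$, contradicting $\|\alpha\|=1$. Thus $\|\beta_n\|\to 1-\|\alpha\|>0$, and since $\alpha_n\to\alpha$ in $C(K)^*$-norm (in particular weak$^*$), subtracting from the weak$^*$ null sequence $\seqn{\mu_n}$ yields $\beta_n\to-\alpha$ weak$^*$.

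Finally, set $\theta_k:=(\beta_{2k}-\beta_{2k+1})/\|\beta_{2k}-\beta_{2k+1}\|$ for $k$ sufficiently large. The pairwise disjointness of $\seqn{\beta_n}$ gives $\|\beta_{2k}-\beta_{2k+1}\|=\|\beta_{2k}\|+\|\beta_{2k+1}\|\to 2(1-\|\alpha\|)>0$, so each $\theta_k$ is a well-defined finitely supported measure of norm one with $\supp(\theta_k)=\supp(\beta_{2k})\cup\supp(\beta_{2k+1})$ pairwise disjoint across $k$. Moreover $\beta_{2k}-\beta_{2k+1}\to-\alpha-(-\alpha)=0$ weak$^*$ and the normalizing denominators are bounded, so $\theta_k\to 0$ weak$^*$, and $\seqk{\theta_k}$ is the desired disjointly supported fsJN-sequence. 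The main obstacle is securing exactly the form of the splitting lemma needed here: a decomposition in $\ell_1(S)$ that is simultaneously compatible with the within-$n$ disjointness (so $\|\mu_n\|=\|\alpha_n\|+\|\beta_n\|$) and with the across-$n$ pairwise disjointness of the $\beta_n$'s, which is precisely the structure that makes the final pairing trick convert ``weak$^*$ null modulo a fixed measure'' into a genuinely weak$^*$ null sequence.
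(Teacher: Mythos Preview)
Your proposal is correct and follows essentially the same route as the paper: both arguments pass to the countable support $S$, apply the Kadets--Pe\l czy\'nski--Rosenthal Subsequence Splitting Lemma to write (a subsequence of) $\mu_n$ as a norm-convergent part plus a pairwise disjointly supported part, use Schur on $\ell_1(S)$ to obtain norm convergence of the first part to some $\alpha$ (the paper's $\mu$), show by the same ``if $\|\alpha\|=1$ then $\mu_n\to\alpha$ in norm, contradicting weak$^*$ nullity'' argument that the disjoint pieces have norm bounded away from zero, and finally take differences $\beta_{2k}-\beta_{2k+1}$ and normalize. The only cosmetic differences are that the paper introduces an auxiliary probability measure $\nu=\sum_n 2^{-n-1}|\mu_n|$ and works with Radon--Nikodym derivatives in $L_1(\nu)$ before invoking Eberlein--\v{S}mulian and Schur, whereas you invoke uniform integrability in $\ell_1(S)$ directly; and your phrase ``the normalizing denominators are bounded'' should read ``bounded away from zero'' (which you have already established).
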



\begin{proof}
Let $\seqn{\mu_n}$ be an fsJN-sequence on a compact space $K$. Put $S = \bigcup_{n\io}\supp\big(\mu_n\big)$ and set $\nu=\sum_{n\io}\big|\mu_n\big|/2^{n+1}$, so each $\mu_n$ is absolutely continuous with respect to the probability measure $\nu$ on $S$. By Kadec--Pe\l czy\'{n}ski--Rosenthal's Subsequence Splitting Lemma applied to the sequence of the Radon--Nikodym derivatives $\seqn{\!\der\!\mu_n/\!\der\!\nu}$ in the Banach space $L_1(S,\wp(S),\nu)$ (cf. \cite[Lemma 5.2.7 and Theorem 5.2.8]{AK06}), Eberlein--\v{S}mulian's theorem, and Schur's property of the Banach space $\big(\ell_1(S),\|\cdot\|_1\big)$, there exist a subsequence $\seqk{\mu_{n_k}}$, a sequence $\seqk{A_k}$ of pairwise disjoint finite subsets of $S$, and a measure $\mu$ of the form $\sum_{x\in S}\alpha_x \delta_x$, $\alpha_x\in\R$ ($x\in S$), such that the sequence $\seqk{\mu_{n_k}\rstr \big(S \setminus A_k\big)}$ converges in norm to $\mu$, that is, $\lim_{k\to\infty}\big\|\mu_{n_k}\rstr \big(S \setminus A_k\big)-\mu\big\|=0$. Of course, $\|\mu\|\le 1$.

For every $k\io$ we set:
\[\nu^1_k=\mu_{n_k}\rstr A_k\quad\text{and}\quad\nu^2_k=\mu_{n_k}\rstr\big(S \setminus A_k\big);\]
so, $\mu_{n_k} = \nu^1_k + \nu^2_k$. Obviously, $\seqk{\nu^1_k}$ is disjointly supported. Observe that for some $\varepsilon > 0$ we have $\big\|\nu^1_k\big\|> \varepsilon$ for almost all $k\in\omega$. Otherwise, we would have a subsequence $\seqi{\nu^1_{k_i}}$ converging in norm to the zero measure $0$. Then, $\seqi{\mu_{n_{k_i}}}$ would converge in norm to $\mu$, so $\|\mu\| = 1$. On the other hand, for every $f\in C(K)$, since $\lim_{i\to\infty}\mu_{n_{k_i}}(f) = 0$, it would hold
\[\mu(f)=\sum_{x\in S}f(x)\mu(\{x\})=0,\]
so the Riesz representation theorem would imply that $\mu=0$, 
a contradiction.

Since $\seqk{\mu_{n_k}}$ is an fsJN-sequence and $\seqk{\nu^2_k}$ converges in norm (and hence weak*) to $\mu$, $\seqk{\nu^1_k}$ is weak* convergent to $-\mu$. Let $\rho_k = \nu^1_{2k} - \nu^1_{2k+1}$ for $k\in \omega$. It follows that $\seqk{\rho_k}$ is weak* null. Clearly, the supports of $\rho_k$'s are pairwise disjoint. Since $\big\|\rho_k\big\| > 2\varepsilon$ for every $k\io$, the sequence $\seqk{\theta_k}$, defined for every $k\io$ by $\theta_k=\rho_k\big/\big\|\rho_k\big\|$, is the desired disjointly supported fsJN-sequence.
\end{proof}

\section{The $\ell_1$-Grothendieck property and the fsJNP\label{section:grothendieck}}

In this section we present the proof of the equivalence of the $\ell_1$-Grothendieck property and the lack of the finitely supported Josefson--Nissenzweig property. The argument basically relies on some routine calculations and the Schur property of spaces of the form $\ell_1(K)$.

\thmgrothfsjnp
\begin{proof}
Let us first show the equivalence (1)$\Leftrightarrow$(2). As $\Delta(K)\sub\ell_1(K)$, the $\ell_1$-Grothendieck property immediately implies the $\Delta$-property. Assume now that $K$ has the $\Delta$-Grothendieck property and let $\seqn{\mu_n\in\ell_1(K)}$ be weak* null. For each $n\io$ find a finite set $F_n\sub K$ such that $\big\|\mu_n\rstr\big(K\sm F_n\big)\big\|<1/n$. It follows that $\seqn{\mu_n\rstr F_n}$ is weak* null, too. Indeed, for every $f\in C(K)$ and $n\io$ we have:
\[\big|\big(\mu_n\rstr F_n\big)(f)\big|\le\big|\mu_n(f)\big|+\big|\big(\mu_n\rstr\big(K\sm F_n\big)\big)(f)\big|<\big|\mu_n(f)\big|+\|f\|_\infty\cdot1/n,\]
so $\lim_{n\to\infty}\big(\mu_n\rstr F_n\big)(f)=0$. 

Similarly, for every $x^{**}\in C(K)^{**}$  we have:
\[\big|x^{**}\big(\mu_n\big)\big|\le\big|x^{**}\big(\mu_n\rstr F_n\big)\big|+\big|x^{**}\big(\mu_n\rstr\big(K\sm F_n\big)\big)\big|\le\]
\[\big|x^{**}\big(\mu_n\rstr F_n\big)\big|+\big\|x^{**}\big\|\cdot1/n,\]
so $\lim_{n\to\infty}\big|x^{**}\big(\mu_n\big)\big|=0$, since by the $\Delta$-Grothendieck property the sequence $\seqn{\mu_n\rstr F_n}$ is weakly null. This proves that $K$ has the $\ell_1$-Grothendieck property.

\medskip

Next, we show the equivalence (3)$\Leftrightarrow$(4). If $K$ has the fsJNP, then $K$ has trivially also the csJNP, since $\Delta(K)\sub\ell_1(K)$. Let us thus assume that $K$ has the csJNP and let $\seqn{\mu_n}$ be a csJN-sequence. For each $n\io$ let $F_n$ be a finite subset of $\supp\big(\mu_n\big)$ such that $\big\|\mu_n\rstr\big(K\sm F_n\big)\big\|<1/n$, so $\big\|\mu_n\rstr F_n\big\|>1-1/n$. For every $n\io$ define the measure $\nu_n$ on $K$ as follows:
\[\nu_n=\big(\mu_n\rstr F_n\big)\Big/\big\|\mu_n\rstr F_n\big\|;\]
then, $\nu_n\in\Delta(K)$ and $\big\|\nu_n\big\|=1$. For every $f\in C(K)$ we have:
\[\big|\nu_n(f)\big|=\big|\big(\mu_n\rstr F_n\big)(f)\big|\Big/\big\|\mu_n\rstr F_n\big\|\le\]
\[\Big(\big|\mu_n(f)\big|+\big|\big(\mu_n\rstr\big(K\sm F_n\big)\big)(f)\big|\Big)\Big/\big\|\mu_n\rstr F_n\big\|<\]
\[\Big(\big|\mu_n(f)\big|+\|f\|_\infty/n\Big)\Big/\big(1-1/n\big),\]
so $\lim_{n\to\infty}\nu_n(f)=0$, since $\lim_{n\to\infty}\mu_n(f)=0$, which implies that $\seqn{\nu_n}$ is weak* null. It follows that $\seqn{\nu_n}$ is an fsJN-sequence on $K$ and hence $K$ has the fsJNP.

\medskip

Finally, we show the equivalence (1)$\Leftrightarrow$(4). Assume that $K$ has the $\ell_1$-Grothendieck property. Suppose additionally that $K$ has the csJNP, so there is a csJN-sequence $\seqn{\mu_n}$ on $K$. Put $L=\bigcup_{n\io}\supp\big(\mu_n\big)$. By the $\ell_1$-Grothendieck property, $\seqn{\mu_n}$ is weakly null. The Hahn--Banach theorem implies then that $\seqn{\mu_n}$ is also weakly null as a sequence of elements of the space $\ell_1(L)$. Since the space $\ell_1(L)$ has the Schur property, it follows that
\[\lim_{n\to\infty}\big\|\mu_n\big\|_{C(K)^*}=\lim_{n\to\infty}\big\|\mu_n\big\|_{\ell_1(L)}=0,\]
which contradicts the fact that $\big\|\mu_n\big\|_{C(K)^*}=1$ for every $n\io$. Thus, $K$ cannot have the csJNP.


Assume now that $K$ does not have the csJNP. Suppose additionally that $K$ does not have the $\ell_1$-Grothendieck property, so there exists a weak* null sequence $\seqn{\mu_n\in\ell_1(K)}$ which is not weakly null. Since the weak topology is weaker than the norm topology, it follows that there exist a subsequence $\seqk{\mu_{n_k}}$ and $\eps>0$ such that for every $k\io$ we have $\big\|\mu_{n_k}\big\|>\eps$. For every $k\io$ define the measure $\nu_k=\mu_{n_k}\big/\big\|\mu_{n_k}\big\|$ and note that $\nu_k\in\ell_1(K)$ and $\big\|\nu_k\|=1$. Also, for every $f\in C(K)$ we have:
\[\big|\nu_k(f)\big|=\big|\mu_{n_k}(f)\big|\big/\big\|\mu_{n_k}\big\|<\big|\mu_n(f)\big|/\eps,\]
so $\lim_{k\to\infty}\nu_k(f)=0$. It follows that $\seqk{\nu_k}$ is a csJN-sequence on $K$, which is a contradiction.
\end{proof}

\begin{corollary}\label{cor:gr_no_fsJNP}
If a compact space $K$ has the Grothendieck property, then it does not have the fsJNP.\noproof
\end{corollary}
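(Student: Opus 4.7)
The plan is to observe that this corollary is an immediate consequence of Theorem \ref{theorem:ell1_grothendieck_equiv_no_fsjnp}, so almost nothing remains to prove; the task is simply to identify the trivial implication that needs to be invoked and then apply the theorem.

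First I would unwind the definitions. The Grothendieck property of $K$ says that \emph{every} weak* convergent sequence in $C(K)^*$ is weakly convergent, whereas the $\ell_1$-Grothendieck property restricts this conclusion to weak* convergent sequences $\seqn{\mu_n}$ with $\mu_n\in\ell_1(K)$ for all $n\io$. Since $\ell_1(K)\sub C(K)^*$, the Grothendieck property is formally stronger than the $\ell_1$-Grothendieck property. Hence if $K$ has the Grothendieck property, then $K$ has the $\ell_1$-Grothendieck property.

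Then I would apply the equivalence (1)$\Leftrightarrow$(3) of Theorem \ref{theorem:ell1_grothendieck_equiv_no_fsjnp}, which states that a compact space has the $\ell_1$-Grothendieck property if and only if it does not have the fsJNP. Combining this with the previous observation yields the claim. There is no real obstacle here; the corollary is essentially a labeling of a one-line consequence of the main equivalence theorem, included for later reference and to emphasize the contrast with the converse direction (whose failure will be the content of Theorem \ref{theorem:extr_disc_grothendieck}).
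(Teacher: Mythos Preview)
Your proposal is correct and matches the paper's approach exactly: the corollary is marked \noproof in the paper because it follows immediately from Theorem \ref{theorem:ell1_grothendieck_equiv_no_fsjnp} together with the trivial observation that the Grothendieck property implies the $\ell_1$-Grothendieck property.
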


Using the same methods as in the proof of the implication (1)$\Rightarrow$(4), one can obtain the following fact (which also implies Corollary \ref{cor:gr_no_fsJNP}).

\begin{proposition}\label{prop:grothendieck_null_ctbl_part}
Let $K$ be a compact space with the Grothendieck property. Assume that $\seqn{\mu_n}$ is a JN-sequence on $K$. For each $n\io$ write $\mu_n=\nu_n+\theta_n$, where $\nu_n\in\ell_1(K)$ and $\theta_n$ is non-atomic. Then, $\big\|\nu_n\big\|\to0$, or equivalently $\big\|\theta_n\big\|\to1$, as $n\to\infty$.\noproof
\end{proposition}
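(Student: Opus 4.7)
The plan is to follow the template of the implication $(1)\Rightarrow(4)$ in Theorem \ref{theorem:ell1_grothendieck_equiv_no_fsjnp}, inserting one extra step that passes from $\seqn{\mu_n}$ to the purely atomic summands $\seqn{\nu_n}$ via the Lebesgue decomposition.

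First, since $K$ is Grothendieck and $\seqn{\mu_n}$ is weak* null, the sequence $\seqn{\mu_n}$ is in fact weakly null in $C(K)^*$. Next, consider the map $P\colon C(K)^*\to C(K)^*$ sending a Radon measure $\mu$ on $K$ to its purely atomic part $\mu_a$. The atomic/non-atomic Lebesgue decomposition $\mu=\mu_a+\mu_c$ is unique and linear in $\mu$, and the two summands are mutually singular, so $\|\mu\|=\|\mu_a\|+\|\mu_c\|$; in particular $P$ is a bounded linear projection of norm at most $1$. As a bounded linear operator, $P$ is weak-to-weak continuous, and therefore $\seqn{\nu_n}=\seqn{P(\mu_n)}$ is weakly null in $C(K)^*$.

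Now set $L=\bigcup_{n\io}\supp(\nu_n)$. Each $\nu_n$ belongs to $\ell_1(K)$ and hence has countable support, so $L$ is countable. The natural inclusion $\ell_1(L)\subseteq C(K)^*$ is an isometry, since the variation norm of a purely atomic measure supported in $L$ coincides with its $\ell_1$-norm, and by the Hahn--Banach theorem every continuous linear functional on $\ell_1(L)$ extends to one on $C(K)^*$. Consequently $\seqn{\nu_n}$ is weakly null when viewed as a sequence in $\ell_1(L)$, and invoking the Schur property of $\ell_1(L)$ yields $\|\nu_n\|_{\ell_1(L)}\to 0$. By the isometry, this is exactly $\|\nu_n\|_{C(K)^*}\to 0$. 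The equivalent form $\|\theta_n\|\to 1$ then follows at once from the identity $\|\mu_n\|=\|\nu_n\|+\|\theta_n\|=1$ coming from the mutual singularity of $\nu_n$ and $\theta_n$ and the normalization of a JN-sequence.

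The main (and rather mild) obstacle is verifying that the atomic projection $P$ is a bounded linear operator on $C(K)^*$; this is the classical fact that the space of Radon measures on $K$ decomposes as an $\ell_1$-direct sum of the purely atomic measures $\ell_1(K')$ (where $K'$ denotes the set of atoms) and the non-atomic Radon measures. Once $P$ is known to be bounded, the remainder of the argument is essentially a verbatim copy of the proof of the implication $(1)\Rightarrow(4)$ of Theorem \ref{theorem:ell1_grothendieck_equiv_no_fsjnp}, applied to the sequence $\seqn{\nu_n}$ in place of a hypothetical csJN-sequence.
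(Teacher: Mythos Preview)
Your proposal is correct and follows exactly the route the paper indicates: the paper omits the proof, remarking only that ``the same methods as in the proof of the implication (1)$\Rightarrow$(4)'' suffice, and your argument carries this out verbatim, supplying the one extra ingredient needed---the bounded projection onto the atomic part $\ell_1(K)$, whose existence the paper has already recorded in Section~\ref{section:notation}. The only minor wording slip is the phrase ``$K'$ denotes the set of atoms''; in this context every point of $K$ is a potential atom, so one simply means the decomposition $C(K)^*=\ell_1(K)\oplus_1\{\text{non-atomic Radon measures}\}$.
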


By the discussion at the end of Introduction (in particular, by \cite[Theorem 1]{BKS19}), we also get the following $C_p$-theoretic characterization of the $\ell_1$-Grothendieck property (cf. \cite[Corollary 2]{Cem84}).

\corellgrothcp

\section{The $\ell_1$-Grothendieck property vs. Grothendieck property\label{section:ell1_gr_no_gr}}

In his unpublished note \cite{Ple05}, Plebanek constructed in ZFC a compact space $K$ such that its every separable closed subspace $L$ has the Grothendieck property, but $K$ itself does not have the property. It follows that $K$ is not separable, but it has the $\ell_1$-Grothendieck property.

Following the ideas of \cite{Ple05}, we will construct in
this section a separable compact space without the Grothendieck property, but with the $\ell_1$-Grothendieck
property.

\begin{lemma}\label{lemma:probability_grothendieck}
Let $K$ be a totally disconnected compact space, $\mu$ a
probability measure on $K$, and $\seqn{A_n}$ a sequence of
mutually disjoint clopen subsets of $K$ such that $\mu\big(A_n\big)>0$ for
every $n\io$. Define the set $F$ as follows: $x\in F$ if and only if
for every clopen neighborhood $U$ of $x$ the following inequality is
satisfied:
\[\limsup_{n\to\infty}\frac{\mu\big(A_n\cap U\big)}{\mu\big(A_n\big)}>0.\]
Then, $F$ is closed and non-empty, and the quotient space $K/F$ does
not have the Grothendieck property.
\end{lemma}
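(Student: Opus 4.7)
The plan is first to verify that $F$ is a closed non-empty subset of $K$, and then to exhibit an explicit bounded weak* null sequence in $C(K/F)^*$ which is not weakly null; this forces $K/F$ to fail the Grothendieck property. Throughout I write $\nu_n=\mu(\cdot\cap A_n)/\mu(A_n)$, the probability measure supported on $A_n$, so that the condition defining $F$ reads $\limsup_n\nu_n(U)>0$ for every clopen neighborhood $U$ of a point of $F$.

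Closedness of $F$ is immediate: if $x\notin F$ is witnessed by a clopen $U\ni x$ with $\limsup_n\nu_n(U)=0$, then the same $U$ witnesses $y\notin F$ for every $y\in U$, so $K\sm F$ is open. To see $F\ne\emptyset$, I would assume otherwise; by compactness finitely many clopen witnesses $U_{x_1},\ldots,U_{x_k}$ cover $K$, whence $1=\nu_n(K)\le\sum_{i=1}^k\nu_n(U_{x_i})$ together with $\limsup_n\nu_n(U_{x_i})=0$ for each $i$ yields the contradiction $1\le 0$. The crucial further observation is that $F\cap A_m=\emptyset$ for every $m$: for $x\in A_m$ the set $A_m$ itself is a clopen neighborhood, and by pairwise disjointness of the $A_n$'s the sequence $\nu_k(A_m)$ has a single nonzero term (equal to $1$, at $k=m$), so $\limsup_k\nu_k(A_m)=0$.

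Let $q\colon K\to K/F$ denote the quotient map and set $\infty=q(F)$. Since $A_n\cap F=\emptyset$, we have $q_*\nu_n(\{\infty\})=\nu_n(F)=0$. Define $\rho_n=q_*\nu_n-\delta_\infty\in C(K/F)^*$. To verify weak* convergence $\rho_n\to 0$, take $f\in C(K/F)$ with lift $\tilde f=f\circ q$, which is continuous on $K$ and equal to $f(\infty)$ on $F$; then $\rho_n(f)=\int_K(\tilde f-f(\infty))\,d\nu_n$. Given $\eps>0$, the closed set $V=\{x\in K\colon|\tilde f(x)-f(\infty)|\ge\eps\}$ is disjoint from $F$, so by compactness and total disconnectedness it is contained in some clopen $W\sub K\sm F$. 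The same covering-by-witnesses compactness argument used to prove $F\ne\emptyset$, now applied to $W$ in place of $K$, gives $\nu_n(W)\to 0$, whence $|\rho_n(f)|\le 2\|f\|_\infty\nu_n(W)+\eps$ and consequently $\rho_n(f)\to 0$.

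Finally, weak convergence of $\rho_n$ in $C(K/F)^*$ would, by the characterization recalled in the preliminaries, force setwise convergence on every Borel subset of $K/F$; but $\rho_n(\{\infty\})=-1$ for all $n$, ruling this out. Hence $\rho_n$ is weak* null and not weakly null, so $K/F$ does not have the Grothendieck property. The main technical obstacle is the weak* convergence step, where one has to upgrade the pointwise limsup condition defining $F$ to a uniform vanishing $\nu_n(W)\to 0$ for clopens $W\sub K\sm F$; this is handled by the same two-stage compactness argument that showed $F\ne\emptyset$.
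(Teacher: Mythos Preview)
Your proof is correct and follows essentially the same approach as the paper: push forward the normalized restrictions $\nu_n$ to $K/F$, show that they weak* converge to $\delta_\infty$ (equivalently, that $q_*\nu_n-\delta_\infty$ is weak* null) via the same finite-cover compactness argument used for $F\neq\emptyset$, and then observe weak convergence fails since $q_*\nu_n(\{\infty\})=0$. The only cosmetic difference is that the paper checks weak* convergence on characteristic functions of clopen sets (sufficient since $K/F$ is totally disconnected), whereas you test against an arbitrary $f\in C(K/F)$ directly.
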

\begin{proof}
We first show that $F\neq\emptyset$. Assume for the sake of contradiction that for every $x\in K$ there exists its clopen neighborhood $U_x$ such that $\lim_n\mu\big(A_n\cap U_x\big)/\mu\big(A_n\big)=0$. By compactness of $K$, there exists a finite cover $U_{x_1},\ldots,U_{x_k}$ of $K$. We then have:
\[1=\lim_{n\to\infty}\frac{\mu\big(A_n\cap K\big)}{\mu\big(A_n\big)}\le\sum_{i=1}^k\lim_{n\to\infty}\frac{\mu\big(A_n\cap U_{x_i}\big)}{\mu\big(A_n\big)}=0,\]
a contradiction.

Let us now prove that $K/F$ does not have the Grothendieck property.
Let $\varphi\colon K\to K/F$ be the quotient map.  Denote
$p=\varphi[F]$. For every $n\io$ define a measure $\mu_n$ on $K/F$
as follows:
\[\mu_n(A)=\frac{\mu\big(A_n\cap\varphi^{-1}[A]\big)}{\mu\big(A_n\big)},\]
where $A$ is a clopen subset of $K/F$. Then, $\mu_n$ converges weak* to $\delta_p$ on $K/F$. Indeed, if $A$ is a clopen in $K/F$ not containing $p$, then $\varphi^{-1}[A]\cap F=\emptyset$ and hence, by compactness of $\varphi^{-1}[A]$ and the definition of $F$, we have $\limsup_n\mu\big(A_n\cap\varphi^{-1}[A]\big)/\mu\big(A_n\big)=0$, and so $\lim_n\mu_n(A)=0$. On the other hand, if $p\in A$, then:
\[\lim_{n\to\infty}\mu_n(A)=\lim_{n\to\infty}\Big(\mu_n(K/F)-\mu_n(A^c)\Big)=1-\lim_{n\to\infty}\mu_n(A^c)=1-0=1.\]
Had $K/F$ the Grothendieck property, $\mu_n$ would converge weakly
to $\delta_p$ and hence $\mu_n(\{p\})$ would converge to $1$, which
is not the case, since $A_n\cap F=\emptyset$ for every $n\io$ as
elements of $\seqn{A_n}$ are mutually disjoint.
\end{proof}

\begin{lemma}\label{lemma:z_pseudo_intersection}
Let $K$ be a basically disconnected compact space, $\mu$ a
probability measure on $K$, and $\seqn{A_n}$ a sequence of
mutually disjoint clopen subsets of $K$ such that $\mu\big(A_n\big)>0$ for
every $n\io$. Let $F$ be defined as in Lemma
\ref{lemma:probability_grothendieck}. Let $\zZ$ denote the family of
all clopen subsets $C$ of $K$ such that
$\lim_{n\to\infty}{\mu(A_n\cap C)}\big/{\mu(A_n)}=0,$ i.e.,
$$ \zZ=\big\{C\subset K\colon\  C\mbox{ is clopen and }C\cap F=\emptyset\big\}. $$
 Then $\zZ$  has the following pseudo-union-like
property: for every sequence $\seqn{C_n}$ of elements in $\zZ$ there exists
$C\in\zZ$ such that \[\forall n\in\omega\: \exists m\in\omega\:
(C_n\setminus C\subset\bigcup_{j\leq m}A_j).\]
\end{lemma}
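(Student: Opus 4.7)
The plan is, for each $n\io$, to trim $C_n$ by removing an initial segment of the $A_j$'s, then take the clopen Boolean supremum of the resulting sequence. More precisely, I will choose natural numbers $m_n$ (to be specified below), set
\[D_n=C_n\sm\bigcup_{j\le m_n}A_j,\]
which is clopen since each $A_j$ is, and then define $C=\ol{\bigcup_{n\io}D_n}$. Because $K$ is basically disconnected and the open set $\bigcup_n D_n$ is a countable union of clopens (hence a cozero set), $C$ is clopen, so $C\in\text{Clop}(K)$. The pseudo-union-like property \((a)\) will come for free: since $D_n\sub C$, we have $C_n\sm C\sub C_n\sm D_n=C_n\cap\bigcup_{j\le m_n}A_j\sub\bigcup_{j\le m_n}A_j$, which is the required inclusion with $m=m_n$. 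All the work lies in arranging $C\in\zZ$.

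The decisive choice is to take $\seqn{m_n}$ \emph{strictly increasing}, and large enough that, for every $k>m_n$,
\[\sum_{i\le n}\frac{\mu(A_k\cap C_i)}{\mu(A_k)}<2^{-n}.\]
This is possible because each $C_i\in\zZ$ gives $\mu(A_k\cap C_i)/\mu(A_k)\to0$, so finitely many such summands can be made simultaneously small past some $m_n$; monotonicity of $m_n$ can be secured by passing to maxima. The key consequence of strict monotonicity is that for every fixed $k$ the index set $\{n\io:\ m_n<k\}$ is finite. Combined with the fact that the $A_j$'s are pairwise disjoint, this yields
\[A_k\cap D_n=\begin{cases}A_k\cap C_n & \text{if }m_n<k,\\\emptyset & \text{if }m_n\ge k,\end{cases}\]
so $A_k\cap\bigcup_{n\io}D_n=\bigcup_{n:\, m_n<k}(A_k\cap C_n)$ is already a \emph{finite} union of clopen sets, and hence clopen.

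Since $A_k$ is clopen, one has $A_k\cap\ol{U}=\ol{A_k\cap U}$ for every open $U\sub K$, so
\[A_k\cap C=\ol{A_k\cap\bigcup_{n\io}D_n}=\bigcup_{n:\, m_n<k}(A_k\cap C_n),\]
the last equality because a finite union of closed sets equals its closure. Letting $n_k-1$ be the largest $n$ with $m_n<k$ (so $k>m_{n_k-1}$ and $n_k\to\infty$ as $k\to\infty$ by strict monotonicity of $\seqn{m_n}$), the choice of $m_{n_k-1}$ gives
\[\frac{\mu(A_k\cap C)}{\mu(A_k)}\le\sum_{n\le n_k-1}\frac{\mu(A_k\cap C_n)}{\mu(A_k)}<2^{-(n_k-1)}\xrightarrow[k\to\infty]{}0,\]
so $C\in\zZ$, completing the proof.

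The main obstacle I anticipated was controlling $\mu(A_k\cap\ol{U})$ for the open set $U=\bigcup_{n}D_n$: in general, for a Radon measure on a basically disconnected compact space, the Boolean-algebraic supremum can add significant mass on the boundary (as happens in $\bo$). The crucial trick that sidesteps this is the strict monotonicity of $m_n$, which, together with the pairwise disjointness of the $A_k$'s, collapses $A_k\cap U$ to a finite union of clopens and thereby eliminates the closure from the estimate.
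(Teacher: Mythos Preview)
Your proof is correct and follows essentially the same construction as the paper: both choose a strictly increasing sequence of cutoffs, set $C=\ol{\bigcup_n (C_n\setminus\bigcup_{j\le m_n}A_j)}$, and then verify $C\in\zZ$ by showing the closure adds nothing inside any fixed $A_k$. Your handling of this last point via $A_k\cap\ol{U}=\ol{A_k\cap U}$ (for clopen $A_k$) is slightly cleaner than the paper's, which instead introduces the boundary set $D=C\setminus\bigcup_n D_n$ and argues $A_k\cap D=\emptyset$ by a limit-point argument, but the underlying idea is the same.
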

\begin{proof}
The proof is similar to the standard one showing that the density ideal on $\omega$ is a P-ideal. Namely, inductively find a strictly increasing sequence $\seqk{n_k}$ of indices such that
\[\frac{\mu\Big(A_n\cap\bigcup_{i=0}^{k}C_i\Big)}{\mu\big(A_n\big)}<\frac{1}{k+1}\]
for every $n>n_k$ and $k\io$. Put:
\[C=\ol{\bigcup_{k\io}\Big(C_k\sm\bigcup_{j=0}^{n_k}A_j\Big)}.\]
Since $K$ is basically disconnected, $C$ is a clopen set. It
follows easily that for every $k\io$ we have:
\[C_k\sm C\sub\bigcup_{j=0}^{n_k}A_j.\]
We shall now show that $C\in\zZ$. Fix $n\io$, $n>n_0$, and let $k\io$ be such that $n_k<n\le n_{k+1}$. We have:
\[\frac{\mu\big(A_n\cap C\big)}{\mu\big(A_n\big)}\le\frac{\mu\Big(A_n\cap\bigcup_{i=0}^{k}C_i\Big)}{\mu\big(A_n\big)}+\frac{\mu\big(A_n\cap D\big)}{\mu\big(A_n\big)}<\frac{1}{k+1}+\frac{\mu\big(A_n\cap D\big)}{\mu\big(A_n\big)},\]
where:
\[D=C\sm\Big(\bigcup_{i\io}\big(C_i\sm\bigcup_{j=0}^{n_i}A_j\big)\Big),\]
so it is just the set added to
$\bigcup_{i\io}\big(C_i\sm\bigcup_{j=0}^{n_i}A_j\big)$ after taking
the closure. Since $k\to\infty$ if $n\to\infty$, it is enough to
show that $A_n\cap D=\emptyset$,  whence $\mu\big(A_n\cap
D\big)/\mu\big(A_n\big)=0$. Assume to the contrary that there exists
$x\in A_n\cap D$. Since $x\in D$, every neighborhood of $x$
intersects  $C_k\setminus\bigcup_{j=0}^{n_k}A_j$ for infinitely many
$k$. In particular, $A_n$ must intersect
$C_k\setminus\bigcup_{j=0}^{n_k}A_j$ for some $k$ with $n_k>n$,
which is impossible.
\end{proof}

\thmextrdiscgroth
\begin{proof}
Let $\mu$, $\seqn{A_n}$, $F$, and $\zZ$ be as in Lemmas
\ref{lemma:probability_grothendieck} and
\ref{lemma:z_pseudo_intersection}. Note that since $K$ is not scattered we may assume that $\mu$ vanishes on points (see \cite[Theorem 19.7.6]{Sem71}). Put
$L=K/F$ and let $\varphi$ be the quotient map. It follows from Lemma
\ref{lemma:probability_grothendieck} that $L$ does not have the
Grothendieck property. For the sake of contradiction assume that $L$
does not have the $\ell_1$-Grothendieck property either, so there is
an fsJN-sequence $\seqn{\mu_n}$ on $L$. By Theorem \ref{theorem:disjointly_supported}, 
we may assume that $\supp\big(\mu_n\big)\cap\supp\big(\mu_k\big)=\emptyset$ for $n\neq k\io$. We may also
assume that $\varphi[F]\cap\supp\big(\mu_n\big)=\emptyset$ for every
$n\io$, and hence  for every $n\io$ we can find $C_n\in\zZ$ such
that $\supp\big(\mu_n\big)\sub\varphi\big[C_n\big]$. Let $C\in\zZ$
be like in Lemma \ref{lemma:z_pseudo_intersection} for the sequence
$\seqn{C_n}$, i.e., $C_n\setminus C\subset\bigcup_{j=0}^{m_n}A_j$
for some increasing number sequence $\seqn{m_n}$. By
Proposition~\ref{prop:grothendieck_null_ctbl_part},
 the Grothendieck property of $K$ (and hence of
$\varphi[C]$) yields
\[\lim_{n\to\infty}\big\|\mu_n\rstr\varphi[C]\big\|=0,\]
which together with $\|\mu_n\rstr\varphi[C_n]\big\|=1$ and
$C_n\setminus C\subset\bigcup_{j=0}^{m_n}A_j$ gives
\[\lim_{n\to\infty}\big\|\mu_n\rstr\bigcup_{j=0}^{m_n}\varphi\big[A_j\big]\big\|=1.\]
On the other hand, since  for every $Q\in\fso$ we have:
\[\lim_{n\to\infty}\big\|\mu_n\rstr\bigcup_{j\in Q}\varphi\big[A_j\big]\big\|=0,\]
it follows that there exists a subsequence $\seqk{\mu_{n_k}}$ and a
sequence $\seqk{Q_k\in\fso}$ of pairwise disjoint sets such that for
every $k\io$ we have:
\[\big\|\mu_{n_k}\rstr\bigcup_{j\in Q_k}\varphi\big[A_j\big]\big\|>1/2.\]
Let $B_k$ be a clopen subset of $\bigcup_{j\in Q_k}A_j$ containing
$\supp\mu_{n_k}\cap\bigcup_{j\in Q_k}A_j$ and such that
$\frac{\mu(B_k\cap A_j)}{\mu(A_j)}<1/2^k$ for all $j\in Q_k$ (this
is the only place where we use that $\mu$ vanishes on points). Set
$D=\overline{\bigcup_{k\in\omega}B_k}$ and note that $D$ is a clopen
subset of $K$ such that $D\cap A_j=B_k\cap A_j$ for all $k\in\omega$
and $j\in Q_k$, and $D\cap A_j=\emptyset $ for
$j\in\omega\setminus\bigcup_{k\in\omega}Q_k$. Thus
\[\lim_{j\to\infty}\frac{\mu(A_j\cap D)}{\mu(A_j)}=0,\]
which means $D\in\zZ$, i.e., $D\cap F=\emptyset$. It follows from
the above that
\[\big\|\mu_{n_k}\rstr\varphi[D]\big\|=\big\|\mu_{n_k}\rstr\bigcup_{j\in Q_k}\varphi\big[A_j\big]\big\|>1/2\]
for every $k\io$, which is a contradiction, since $K$ (and hence
$\varphi[D]$) has the Grothendieck property.
\end{proof}

Considering $K=\bo$, we obtain the following corollary.

\cornotgrellgr

The latter result, together with Corollary \ref{cor:ell1_groth_c0p}, yield the following functional-analytic consequence.

\corexistscnotcp

We will also need the following definition.

\begin{definition}\label{def:ell1_gr_ba}
A Boolean algebra $\aA$ has \textit{the Grothendieck property} (resp. \textit{the $\ell_1$-Grothendieck property}) if its Stone space $St(\aA)$ has this property.
\end{definition}

Recall that every ($\sigma$-)complete Boolean algebra has the Grothendieck property. By the Stone duality we also obtain the following corollary saying that every ($\sigma$-)complete Boolean algebra may be slimmed down in such a way that it loses the Grothendieck property but still preserves the $\ell_1$-Grothendieck property.

\begin{corollary}\label{cor:grothendieck_subalgebras}
For every $\sigma$-complete Boolean algebra $\aA$ there exists a subalgebra $\bB\sub\aA$ such that $\bB$ does not have the Grothendieck property but it has the $\ell_1$-Grothendieck property. \noproof
\end{corollary}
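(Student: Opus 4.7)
The plan is to use Stone duality to translate the problem directly into Theorem \ref{theorem:extr_disc_grothendieck}. First, I would assume $\aA$ is infinite, since otherwise every subalgebra $\bB$ has finite Stone space, making $C(St(\bB))$ finite-dimensional and hence trivially Grothendieck. Let $K=St(\aA)$; the $\sigma$-completeness of $\aA$ ensures that $K$ is an infinite basically disconnected compact space, so Theorem \ref{theorem:extr_disc_grothendieck} yields a compact space $L$ and a continuous surjection $\varphi\colon K\to L$ such that $L$ does not have the Grothendieck property but has the $\ell_1$-Grothendieck property.

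Next, I would verify that $L$ is totally disconnected, which is needed to apply Stone duality on both sides of $\varphi$. Inspecting the proof of Theorem \ref{theorem:extr_disc_grothendieck}, the space $L$ is constructed as the quotient $K/F$ by a closed set $F\sub K$; since $K$ is compact Hausdorff and zero-dimensional, a routine compactness argument shows that $F$ and any point of $K\sm F$ can be separated by a clopen subset of $K$, and from this one deduces that $K/F$ inherits total disconnectedness.

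With $L$ totally disconnected, the argument concludes formally. The surjection $\varphi\colon K\to L$ corresponds, via $A\mapsto\varphi^{-1}[A]$, to an injective Boolean homomorphism from the algebra of clopen subsets of $L$ into that of $K$, which is canonically identified with $\aA$. Taking $\bB$ to be the image of this homomorphism produces a subalgebra of $\aA$ whose Stone space is homeomorphic to $L$. By Definition \ref{def:ell1_gr_ba}, $\bB$ therefore fails the Grothendieck property but satisfies the $\ell_1$-Grothendieck property, which is the desired conclusion.

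I do not anticipate any real obstacle: the only step that requires any verification beyond bookkeeping is the total disconnectedness of $K/F$, a standard property of quotients of zero-dimensional compact Hausdorff spaces by closed sets. Everything else is the direct Stone-dual translation of Theorem \ref{theorem:extr_disc_grothendieck} into the language of Boolean algebras.
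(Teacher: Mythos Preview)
Your proposal is correct and follows the same route as the paper, which simply says the corollary follows from Theorem \ref{theorem:extr_disc_grothendieck} by Stone duality. You supply more detail than the paper does, in particular the verification that $L=K/F$ is totally disconnected so that Stone duality actually applies---a point the paper leaves implicit. One small remark: your handling of the finite case is phrased as if it were a trivial instance of the corollary, but in fact the corollary is simply false for finite $\aA$ (every subalgebra would be Grothendieck), so the statement should be read with the tacit assumption that $\aA$ is infinite; your decision to pass immediately to the infinite case is therefore the right one.
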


Analyzing the proof of Theorem \ref{theorem:extr_disc_grothendieck}, it seems that the theorem might also hold for those infinite totally disconnected compact spaces which are the Stone spaces of Boolean algebras with e.g. Haydon's Subsequential Completeness Property (\cite{Hay81}). We do not know however how far the $\sigma$-completeness of $\aA$ can be weakened in Corollary \ref{cor:grothendieck_subalgebras}, which motivates the following question.

\begin{question}
Let $\aA$ be a Boolean algebra with the Grothendieck property. Does there exist a Boolean subalgebra $\bB$ of $\aA$ which fails to have the Grothendieck property but which nonetheless has the $\ell_1$-Grothendieck property?
\end{question}

\section{Systems of simple extensions and the fsJNP}

In this section we will show, combining several already known results, that the limit of every inverse system of simple extensions of compact spaces has the finitely supported Josefson--Nissenzweig property (Theorem \ref{theorem:inverse_fsjnp}). This yields a corollary that many consistent examples of Efimov spaces from the literature (e.g. \cite{Fed76}, \cite{DPM09}, \cite{DS13}), constructed under such axioms as the Continuum Hypothesis or Martin's axiom, have the fsJNP as well. In Subsection \ref{section:tau_simple_ext} we will generalize the results.

\subsection{Systems of simple extensions\label{section:simple_ext}}


We start this subsection with recalling what an inverse system of simple extensions is. For general information on limits of inverse systems, see \cite[Chapters 2.5 and 3.2]{Eng89}.

\begin{definition}\label{def:inv_sys_simple_ext}
An inverse system $\big\langle K_\alpha,\pi_\alpha^\beta\colon\alpha\le\beta\le\delta\big\rangle$ of totally disconnected compact spaces is \textit{a system of simple extensions} if
\begin{itemize}
    \item it is \textit{continuous}, i.e. for every limit ordinal $\gamma\le\delta$ the space $K_\gamma$ is the limit of the inverse system $\big\langle K_\alpha,\pi_\alpha^\beta\colon\alpha\le\beta\le\gamma\big\rangle$,
	\item $K_0$ is a singleton,
    \item for every $\alpha<\delta$ the space $K_{\alpha+1}$ is \textit{a simple extension of }$K_\alpha$, i.e. there is $x_\alpha\in K_\alpha$ such that $\Big|\big(\pi_\alpha^{\alpha+1}\big)^{-1}\big(x_\alpha\big)\Big|=2$ and for every $y\in K_\alpha\sm\big\{x_\alpha\big\}$ it holds $\Big|\big(\pi_\alpha^{\alpha+1}\big)^{-1}(y)\Big|=1$.
\end{itemize}
\end{definition}

To prove the main result of this section, Theorem \ref{theorem:inverse_fsjnp}, we also need to provide several definitions concerning complexity of probability measures on compact spaces.

\begin{definition}
\textit{The Maharam type} of a probability measure $\mu$ on a compact space $K$ is the minimal cardinality of a family $\cC$ of Borel subsets of $K$ such that for every Borel subset $B$ of $K$ and $\eps>0$ there exists $C\in\cC$ such that $\mu(B\triangle C)<\eps$.
\end{definition}

\noindent Equivalently, the Maharam type of a probability measure $\mu$ is the density of the Banach space $L_1(\mu)$ of all $\mu$-integrable functions. For more information on the topic, see \cite{Mah42}, \cite{Fre89}, or \cite{PS14}.

A notion closely related to the countable Maharam type is the uniform regularity, introduced by Babiker \cite{Bab77} and later intensively studied in \cite{Pol82}, \cite{Mer96}, and \cite{KP11}. 

\begin{definition}\label{def:unif_reg_measure}
A probability measure $\mu$ on a compact space $K$ is \textit{uniformly regular} if there exists a countable family $\cC$ of zero subsets of $K$ such that for every open subset $U$ of $K$ and every $\eps>0$ there exists $F\in\cC$ such that $F\sub U$ and $\mu(U\sm F)<\eps$.
\end{definition}

\noindent Uniformly regular measures are also  called
\textit{strongly countably determined} (cf. \cite{Pol82}). Note
that every uniformly regular probability measure $\mu$ has
necessarily separable support and countable Maharam type.

Recall that a subset $Y$ of a topological space $X$ is \textit{a $\G_\delta$-set} if there exists a countable collection $\uU$ of open subsets of $X$ such that $Y=\bigcap\uU$. An element $x\in X$ is called \textit{a $\G_\delta$-point} if $\{x\}$ is a   $\G_\delta$-set of $X$. Every zero subset of a topological space is a closed $\G_\delta$-set and one can easily show that in a normal space every closed $\G_\delta$-set is a zero set, thus the definition of uniformly regular measures may be stated in terms of closed $\G_\delta$-sets. 

\begin{proposition}\label{prop:unif_reg_g_delta}
Let $\mu$ be a uniformly regular measure on a compact space $K$ and $x\in K$ be such that $\mu(\{x\})>0$. Then, $x$ is a $\G_\delta$-point.
\end{proposition}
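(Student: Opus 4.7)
The plan is to use the witnessing family $\cC$ from Definition \ref{def:unif_reg_measure} to produce a countable collection of zero sets containing $x$ whose intersection is exactly $\{x\}$. Since every zero set is itself a $\G_\delta$-set, taking a further countable intersection will realize $\{x\}$ as a $\G_\delta$-set.

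Set $\alpha=\mu(\{x\})>0$ and let $\cC_x=\{F\in\cC\colon x\in F\}$. The first step is to observe a ``forcing'' lemma: for every open neighborhood $U$ of $x$, there is some $F\in\cC_x$ with $F\sub U$. Indeed, apply Definition \ref{def:unif_reg_measure} to $U$ with $\eps=\alpha/2$; this yields $F\in\cC$ with $F\sub U$ and $\mu(U\sm F)<\alpha$. If $x$ failed to lie in $F$, then $\{x\}\sub U\sm F$, giving $\mu(U\sm F)\ge\alpha$, a contradiction. So $x\in F$ and $F\in\cC_x$.

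Next I would show $\bigcap\cC_x=\{x\}$. Fix any $y\neq x$ in $K$. Since $K$ is compact Hausdorff (hence normal), choose disjoint open sets $U,V$ with $x\in U$ and $y\in V$. By the previous paragraph there is $F\in\cC_x$ with $F\sub U$; since $y\in V\sub K\sm U\sub K\sm F$, we conclude $y\notin\bigcap\cC_x$. Thus $\bigcap\cC_x=\{x\}$.

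Finally, each $F\in\cC_x$ is a zero set, say $F=f_F^{-1}(0)$ for some $f_F\in C(K)$, and so $F=\bigcap_{n\io}f_F^{-1}\big((-1/n,1/n)\big)$ is a $\G_\delta$-set. As $\cC_x$ is countable, $\{x\}=\bigcap\cC_x$ is a countable intersection of countable intersections of open sets, i.e., a $\G_\delta$-set, which is what we want. There is no real obstacle here beyond the initial observation that $\eps<\alpha$ forces $x\in F$; the rest is routine.
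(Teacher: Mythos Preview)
Your proof is correct and follows essentially the same approach as the paper: both define $\cC_x=\{F\in\cC\colon x\in F\}$ and show $\bigcap\cC_x=\{x\}$ via the key observation that applying uniform regularity to an open $U\ni x$ with $\eps<\mu(\{x\})$ forces $x\in F$. Your argument is in fact a bit more streamlined than the paper's, which detours through an auxiliary bound $\mu(U\setminus\{x\})<\eps/3$ before reaching the same conclusion.
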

\begin{proof}
Let $\cC$ be a countable collection of zero sets (closed $\G_\delta$'s) witnessing that $\mu$ is uniformly regular. Put $\cC'=\big\{F\in\cC\colon\ x\in F\big\}$. It follows that $\{x\}=\bigcap\cC'$. To see this, assume that there is $y\in\bigcap\cC'$ such that $x\neq y$. Put $\eps=\mu(\{x\})$; so $\eps>0$. Using the regularity of $\mu$, it is easy to see that there is an open neighborhood $U$ of $x$ not containing $y$ and such that $\mu(U\sm\{x\})<\eps/3$. However, there is no $F\in\cC$ such that $F\sub U$ and $\mu(U\sm F)<\eps/3$, which contradicts the choice of $\cC$. Indeed, if there was such $F$, then from the fact that
\[\eps\le\mu(U)=\mu(U\sm\{x\})+\mu(\{x\})<4\eps/3\]
we would get that $\mu(F)>2\eps/3$, so necessarily $x\in F\in\cC'$ and hence $y\in F\sub U$, whereas $y\not\in U$, which would be a contradiction. Since the intersection of a countable collection of $\G_\delta$-sets is $\G_\delta$, $x$ is a $\G_\delta$-point.
\end{proof}

\begin{remark}\label{remark:atom_unif_reg_conv_seq}
Note that from Proposition \ref{prop:unif_reg_g_delta} it immediately follows that if for a uniformly regular measure $\mu$ on an infinite compact space $K$ there is a non-isolated point $x\in K$ such that $\mu(\{x\})>0$), then $K$ contains a non-trivial sequence converging to $x$.
\end{remark}

\begin{definition}
Let $\mu$ be a probability measure on a compact space $K$. We say that a sequence $\seqn{x_n}$ of points in $K$ is \textit{$\mu$-uniformly distributed} if the sequence of sums
$\seq{\frac{1}{n}\sum_{i=0}^{n-1}\delta_{x_i}}{n>0}$ converges weak* to
$\mu$. 
\end{definition}

Uniformly distributed sequences constitute a useful tool for
investigating various properties of probability measures as they
allow to treat those measures in a way similar to the classical
Jordan measure on the real line, see e.g. \cite{KN74}, \cite{Los78}, \cite{Los79}, or \cite{Mer96}.


Let us say that a sequence $\seqn{x_n}$ in a space $X$ is \textit{injective} if $x_n\neq x_{n'}$ for every $n\neq n'\io$. The following proposition will be crucial for the proof of the main theorem of this section.

\begin{proposition}\label{prop:unif_reg_meas_unif_distr_injective_seq}
If $\mu$ is a non-atomic uniformly regular measure on a compact space $K$, then $\mu$ admits a uniformly distributed injective sequence.
\end{proposition}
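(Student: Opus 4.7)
My starting point is the main theorem of Mercourakis~\cite{Mer96} (which builds on Losert~\cite{Los78,Los79}): every uniformly regular probability measure on a compact Hausdorff space admits a $\mu$-uniformly distributed sequence. Granted this, the only remaining task is to arrange injectivity, which is where the non-atomicity of $\mu$ will be used.

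A conceptually clean way to do this is to observe that Mercourakis's construction actually yields something stronger: for $\mu^\omega$-almost every $(z_n)\in K^\omega$, the Ces\`aro averages $\frac{1}{N}\sum_{n<N}\delta_{z_n}$ converge weak* to $\mu$. (The argument passes through a countable $L_1(\mu)$-dense family of continuous functions, an SLLN, and uniform regularity to promote pointwise convergence on this family to convergence against all of $C(K)$.) Let $U\subseteq K^\omega$ denote the corresponding $\mu^\omega$-conull set of $\mu$-uniformly distributed sequences.

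I then intersect $U$ with the set $I\subseteq K^\omega$ of injective sequences. The complement decomposes as a countable union,
\[I^c=\bigcup_{i\neq j}\bigl\{z\in K^\omega:z_i=z_j\bigr\},\]
and by Fubini combined with non-atomicity,
\[\mu^\omega\bigl(\{z\in K^\omega:z_i=z_j\}\bigr)=\int_K\mu(\{x\})\,d\mu(x)=0\]
for every $i\neq j$. Hence $\mu^\omega(I)=1$, so $U\cap I$ is $\mu^\omega$-conull and in particular non-empty; any of its elements is an injective $\mu$-uniformly distributed sequence.

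The only substantive obstacle is the parenthetical remark in the second paragraph, namely the upgrade from $\mu^\omega$-a.s.\ convergence of Ces\`aro averages against a countable family $\{f_k\}\subset C(K)$ (which is immediate from the SLLN applied to each $f_k$) to $\mu^\omega$-a.s.\ convergence against \emph{every} $f\in C(K)$. This step genuinely uses uniform regularity rather than just countable Maharam type. If one prefers a more hands-on route, one can instead modify Mercourakis's inductive construction directly, always choosing the next term of the sequence outside the finite set of already-chosen points; this is permissible because non-atomicity forces every neighborhood of positive measure to contain uncountably many points (any countable Borel set has measure zero under a non-atomic Radon measure).
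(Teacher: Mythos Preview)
Your proposal is correct and follows essentially the same route as the paper: the paper invokes \cite[Corollary 2.8]{Mer96} to conclude that the set $S\subseteq K^\omega$ of $\mu$-uniformly distributed sequences has full $\mu^\omega$-measure, and then uses Fubini together with non-atomicity to see that the set of non-injective sequences is $\mu^\omega$-null, exactly as you do. Your parenthetical sketch of why Mercourakis's result holds is more detailed than the paper's bare citation, and your alternative hands-on construction is not in the paper, but the core argument is identical.
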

\begin{proof}
Let $\mu^\infty$ be the product Radon measure induced by $\mu$ on the product space $K^\omega$. \cite[Corollary 2.8]{Mer96} implies that $\mu^\infty(S)=1$, where $S$ denotes the subspace of $K^\omega$ consisting of all $\mu$-uniformly distributed sequences in $K$. Since by the non-atomicity of $\mu$ the subspace $T$ of $K^\omega$ consisting of all non-injective sequences in $K$ satisfies $\mu^\infty(T)=0$ (by, e.g., the Fubini--Tonelli theorem \cite[Theorem 7.27]{Fol99}), it follows that there exists a $\mu$-uniformly distributed sequence in $S$ which is injective.
\end{proof}

\begin{proposition}\label{prop:uds_fsjnp}
Let $\mu$ be a probability measure on a compact space $K$ and $\seqn{x_n}$ be a $\mu$-uniformly distributed injective sequence in $K$. Then, $K$ has the fsJNP.
\end{proposition}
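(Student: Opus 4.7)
The plan is to exhibit an explicit fsJN-sequence on $K$ built directly from the given $\mu$-uniformly distributed sequence. For each integer $n\geq 1$ I would set
\[\sigma_n=\frac{1}{n}\sum_{i=0}^{n-1}\delta_{x_i},\]
so that by hypothesis $\sigma_n\to\mu$ weak*. The candidate fsJN-sequence is then the difference of two consecutive ``doubled'' averages, namely
\[\nu_n=\sigma_{2n}-\sigma_n=\frac{1}{2n}\sum_{i=n}^{2n-1}\delta_{x_i}-\frac{1}{2n}\sum_{i=0}^{n-1}\delta_{x_i},\]
which is manifestly finitely supported, hence $\nu_n\in\Delta(K)$.

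The verification splits into two observations. For the norm, the injectivity of $\seqn{x_n}$ ensures that the sets $\{x_0,\dots,x_{n-1}\}$ and $\{x_n,\dots,x_{2n-1}\}$ are disjoint, so the positive and negative parts of $\nu_n$ are disjointly supported. Each such part has total mass exactly $1/2$, so $\|\nu_n\|=1$. For weak* convergence to zero, given any $f\in C(K)$, both $\sigma_{2n}(f)$ and $\sigma_n(f)$ tend to $\mu(f)$, so $\nu_n(f)=\sigma_{2n}(f)-\sigma_n(f)\to 0$.

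Combining these observations, $\seq{\nu_n}{n\geq 1}$ is an fsJN-sequence on $K$, which shows that $K$ has the fsJNP. There is no real obstacle here: the injectivity hypothesis is precisely what makes the norm computation immediate by guaranteeing disjoint supports between the two halves of the average, while the uniform distribution hypothesis is precisely what forces the weak* limit to vanish after subtraction. The only mildly delicate design choice is to pick the \emph{right} pair of averages to difference; pairing $\sigma_n$ with $\sigma_{2n}$ is the simplest option that keeps both the total variation normalized to $1$ and the two halves of the support cleanly separated.
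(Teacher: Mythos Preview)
Your proof is correct and follows essentially the same approach as the paper: both difference two partial averages of the uniformly distributed sequence so that the weak* limits cancel, and use injectivity to bound the norm from below. Your specific choice $\nu_n=\sigma_{2n}-\sigma_n$ is in fact slightly cleaner than the paper's, since it yields $\|\nu_n\|=1$ exactly and thus avoids the final normalization step.
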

\begin{proof}
Let $\omega=\bigcup_{n\io}P_n$ be a partition of $\omega$ into finite sets such that
\[\max P_n<\min P_{n+1}\quad\text{and}\quad\big|P_n\big|\big/\max P_n\ge1/2\]
for every $n\io$ (cf. the standard proof that $\sum_{n\io}1/n=\infty$). For every $n\io$ let us write:
\[\nu_n=\frac{1}{\max P_{n+1}}\sum_{k<\max P_{n+1}}\delta_{x_k}-\frac{1}{\max P_n}\sum_{k<\max P_n}\delta_{x_k}.\]
Then, by the injectivity of $\seqn{x_n}$,
\[\big\|\nu_n\big\|\ge\frac{1}{\max P_{n+1}}\cdot\big|P_{n+1}\big|\ge1/2.\]
Since either sum in the definition of $\nu_n$'s is weak* convergent to $\mu$, $\seqn{\nu_n}$ converges weak* to $0$. Normalizing $\mu_n=\nu_n/\big\|\nu_n\big\|$, $\seqn{\mu_n}$ is an fsJN-sequence on $K$.
\end{proof}

The following theorem was proved by Borodulin-Nadzieja \cite{PBN07} in the language of minimally generated Boolean algebras, the notion dual to limits of inverse systems of simple extensions (see \cite{Kop89}).

\begin{theorem}\label{theorem:pbn}
The following assertions hold for every totally disconnected compact space $K$:
\begin{enumerate}
    \item \cite[Theorem 4.6]{PBN07} $K$ carries either a uniformly regular measure or a measure of uncountable Maharam type;
    \item \cite[Theorem 4.9]{PBN07} If $K$ is the limit of an inverse system of simple extensions, then every measure on $K$ has countable Maharam type. In particular, there exists a uniformly regular measure on $K$.\noproof
\end{enumerate}
\end{theorem}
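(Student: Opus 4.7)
The plan is to treat the two parts separately, handling part (2) first by transfinite induction on the length $\delta$ of the inverse system, and part (1) by a transfinite construction producing the required dichotomy.

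For part (2), fix a probability measure $\mu$ on $K=\varprojlim K_\alpha$ and set $\mu_\alpha=(\pi_\alpha)_*\mu$, where $\pi_\alpha\colon K\to K_\alpha$ is the canonical projection. I would prove by induction on $\alpha\le\delta$ that every $\mu_\alpha$ has countable Maharam type. The successor step is the heart: since $K_{\alpha+1}$ is a simple extension of $K_\alpha$, the Boolean algebra $\mathrm{Clop}(K_{\alpha+1})$ is generated over $(\pi_\alpha^{\alpha+1})^{-1}[\mathrm{Clop}(K_\alpha)]$ by a single new clopen (one separating the two preimages of $x_\alpha$); hence in the measure algebra of $\mu_{\alpha+1}$ the density grows by at most one element compared to that of $\mu_\alpha$, keeping $L_1(\mu_{\alpha+1})$ separable. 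At a limit ordinal $\gamma$, every clopen of $K_\gamma$ pulls back from some $K_\alpha$ with $\alpha<\gamma$ by compactness. If $\mathrm{cf}(\gamma)=\omega$ I would pick a cofinal $\omega$-sequence $\alpha_n\nearrow\gamma$ and assemble a countable $L_1(\mu_\gamma)$-dense family from the countable dense families of the $L_1(\mu_{\alpha_n})$'s. The genuinely subtle case is $\mathrm{cf}(\gamma)>\omega$: here one uses the ccc of the measure algebra of $\mu_\gamma$ to argue that the set of successor indices $\alpha<\gamma$ at which the Maharam type of $\mu_{\alpha+1}$ strictly exceeds that of $\mu_\alpha$ (equivalently, the new generator is not approximable by $\mu_{\alpha+1}$-null differences from pullbacks) gives rise to a pairwise disjoint family of non-null elements in the measure algebra, which is therefore countable; all meaningful changes therefore occur below some $\alpha^\ast<\gamma$, and the induction hypothesis at $\alpha^\ast$ finishes the argument.

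For part (1), I would proceed by contraposition: assume $K$ carries no probability measure of uncountable Maharam type, and construct a uniformly regular probability measure on $K$. Starting from some probability measure $\mu_0$ (for instance a Dirac mass at some point of $K$), I would perform a transfinite recursion building measures $\mu_\beta$ and a family of clopen ``witnesses.'' At stage $\beta$, if $\mu_\beta$ is already uniformly regular, we stop; otherwise there is an open set $U$ and an $\varepsilon>0$ such that no clopen contained in $U$ from the witness family built so far approximates $U$ to within $\varepsilon$. Exploiting total disconnectedness, one finds inside $U$ a clopen on which to place a new ``independent'' mass, forming $\mu_{\beta+1}$ as a convex combination of $\mu_\beta$ with a carefully chosen probability measure, thereby strictly increasing the Maharam type. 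The recursion must terminate at some countable $\beta<\omega_1$, otherwise by taking a weak${}^\ast$ cluster point at $\omega_1$ (or suitably averaging along the transfinite sequence) one would produce a probability measure on $K$ whose measure algebra contains uncountably many independent non-trivial elements, contradicting the standing assumption.

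The main obstacle I foresee is the uncountable-cofinality limit in part (2): one must show carefully that the ``strict-increase'' indices genuinely index pairwise disjoint positive-measure elements in the limit measure algebra, so that ccc applies; this requires a precise analysis of how the generator at stage $\alpha$ interacts with the measure $\mu_\gamma$ pushed forward, rather than only with $\mu_{\alpha+1}$. A secondary obstacle is the enrichment step in part (1), where one must ensure the newly added mass is truly independent (in the measure-algebraic sense) of all earlier contributions so that the Maharam type really grows.
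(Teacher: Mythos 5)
The paper does not actually prove this theorem: it is quoted with \noproof from Borodulin-Nadzieja \cite{PBN07} (Theorems 4.6 and 4.9 there), so your attempt can only be measured against the cited arguments. For part (2), your successor and countable-cofinality steps are fine, but the step you yourself flag as subtle is genuinely broken as stated: strict growth of the chain of closed subalgebras $M_\alpha$ of the measure algebra does \emph{not} produce a pairwise disjoint family of non-null elements, and ccc of a measure algebra is perfectly compatible with a strictly increasing continuous $\omega_1$-chain in which each step adds a single generator --- the homogeneous measure algebra of Maharam type $\omega_1$ (product measure on $2^{\omega_1}$, with $M_\alpha$ generated by the first $\alpha$ coordinates) is exactly such a configuration. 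So ccc alone cannot bound the set of strict-increase indices; one must use the specific geometry of simple extensions. The point is quantitative: writing $F_\alpha'=\pi_{\alpha+1}^{-1}(x_\alpha')$ and $F_\alpha''=\pi_{\alpha+1}^{-1}(x_\alpha'')$ for the fibers in $K$ over the two preimages of the split point $x_\alpha$, regularity of $\mu$ gives that the distance in the measure algebra of the limit measure $\mu$ from the new generator to $M_\alpha$ is exactly $\min\big(\mu(F_\alpha'),\mu(F_\alpha'')\big)$; moreover, for $\alpha<\beta$ the full fibers $\pi_\alpha^{-1}(x_\alpha)$ and $\pi_\beta^{-1}(x_\beta)$ are either disjoint or the latter is contained in one of $F_\alpha'$, $F_\alpha''$. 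Consequently, for fixed $\eps>0$, any nesting chain of fibers with distance $\ge\eps$ loses mass at least $\eps$ at each step, and incomparable such fibers are disjoint sets of mass $\ge2\eps$, so only finitely many $\alpha$ satisfy $d_\alpha\ge\eps$ and only countably many satisfy $d_\alpha>0$. This settles \emph{all} limit stages at once, with no cofinality split; without this fiber analysis your induction does not close.

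For part (1), the successor-step engine is right in spirit (failure of uniform regularity lets you place new mass that strictly enlarges the measure algebra), but the termination argument fails at the limit: a convex combination over $\omega_1$-many summands can assign positive weight to only countably many of them, and a weak* cluster point of the $\mu_\beta$ need not remember anything built along the way --- it can perfectly well be a Dirac measure --- so neither device yields a measure whose algebra contains uncountably many ``independent'' elements. A workable route (and, modulo presentation, the one in \cite{PBN07}) keeps a \emph{coherent} object instead of a sequence of globally changing measures: one builds an increasing continuous chain of countable subalgebras $\aA_\alpha$ of the clopen algebra of $K$ together with finitely additive measures $\nu_\alpha$ on $\aA_\alpha$ with $\nu_\beta\rstr\aA_\alpha=\nu_\alpha$ for $\alpha<\beta$, where at each successor the failure of uniform regularity of a Radon extension of $\nu_\alpha$ furnishes a clopen set at positive symmetric-difference distance from $\aA_\alpha$. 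At stage $\omega_1$ the union measure extends to a Radon measure on $K$ (lift along the canonical surjection of $K$ onto the Stone space of the union algebra; the set of such lifts is a non-empty compact convex set), and its Maharam type is uncountable because a separable measure algebra cannot contain a strictly increasing continuous $\omega_1$-chain of closed subalgebras. Freezing the values on $\aA_\alpha$ once and for all is precisely what your convex-combination scheme lacks, and it is also what makes the ``new mass is genuinely non-approximable'' requirement checkable at the stage where it is introduced.
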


\noindent Let us recall here that D\v{z}amonja and Plebanek \cite[Lemma 4.1]{DP07} proved that if an inverse system of simple extensions has length at most $\omega_1$, then every measure on its limit is uniformly regular.

We are in the position to prove the main theorem of this section.

\thminversefsjnp
\begin{proof}
If $K$ is a scattered space, then it contains a non-trivial convergent sequence and hence it trivially has the fsJNP. So, let us assume that there exists an infinite closed subspace $L$ of $K$ containing no isolated points. By \cite[Proposition 2.5.6]{Eng89}, $L$ is also the limit of an inverse system of simple extensions. By Theorem \ref{theorem:pbn}.(2), there exists a uniformly regular
measure $\mu$ on $L$. If $\mu$ has an atom, then it is non-isolated in $L$, hence, by Proposition
\ref{prop:unif_reg_g_delta} and Remark
\ref{remark:atom_unif_reg_conv_seq}, $L$ contains a
non-trivial convergent sequence and so $L$ has the
fsJNP. If on the other hand $\mu$ is non-atomic, then
Proposition \ref{prop:unif_reg_meas_unif_distr_injective_seq} implies that $\mu$ admits a uniformly distributed injective sequence. Now, Proposition \ref{prop:uds_fsjnp} yields an fsJN-sequence on $L$. Since the fsJNP is inherited by superspaces, $K$ has the fsJNP, too.
\end{proof}

The following consequence of Theorems \ref{theorem:ell1_grothendieck_equiv_no_fsjnp} and \ref{theorem:inverse_fsjnp} generalizes the well-known fact that no minimally generated Boolean algebra has the Grothendieck property.

\begin{corollary}\label{cor:min_gen_no_ell_1_gr}
If $\aA$ is a minimally generated Boolean algebra, then $\aA$ does not have the $\ell_1$-Grothendieck property.\noproof
\end{corollary}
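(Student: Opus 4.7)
The plan is to chain together Stone duality with the two main theorems of the preceding sections, since the corollary is essentially a translation of Theorem~\ref{theorem:inverse_fsjnp} into the language of Boolean algebras via Definition~\ref{def:ell1_gr_ba}. First I would invoke the well-known duality (as recorded in Koppelberg's monograph \cite{Kop89} and alluded to immediately before Theorem~\ref{theorem:pbn}) which says that a Boolean algebra $\aA$ is minimally generated if and only if its Stone space $St(\aA)$ is the limit of a continuous inverse system of simple extensions in the sense of Definition~\ref{def:inv_sys_simple_ext}. This is the only external ingredient beyond the results already proved in the paper.

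Second, I would dispose of the trivial finite case: if $\aA$ is finite then $St(\aA)$ is a finite discrete space, and by Definition~\ref{def:ell1_gr} the $\ell_1$-Grothendieck property is only declared for infinite compact spaces, so there is nothing to verify. Hence we may assume $\aA$ is infinite, which makes $St(\aA)$ an infinite totally disconnected compact space.

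Third, with $St(\aA)$ exhibited as the limit of an inverse system of simple extensions, Theorem~\ref{theorem:inverse_fsjnp} applies and yields that $St(\aA)$ has the fsJNP. Finally, invoking the equivalence (1)$\Leftrightarrow$(3) of Theorem~\ref{theorem:ell1_grothendieck_equiv_no_fsjnp}, the fsJNP is precisely the negation of the $\ell_1$-Grothendieck property for $C(St(\aA))$. Therefore $St(\aA)$, and hence $\aA$ itself by Definition~\ref{def:ell1_gr_ba}, fails to have the $\ell_1$-Grothendieck property.

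There is no real obstacle here; the work has all been done in Theorems~\ref{theorem:ell1_grothendieck_equiv_no_fsjnp} and~\ref{theorem:inverse_fsjnp}, and the only thing the corollary adds is the dictionary between minimally generated Boolean algebras and inverse limits of simple extensions. The ``main step'' is simply recalling that this dictionary is an exact equivalence, so that the hypothesis on $\aA$ matches the hypothesis of Theorem~\ref{theorem:inverse_fsjnp} without any loss.
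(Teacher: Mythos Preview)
Your proposal is correct and matches the paper's approach exactly: the paper presents this corollary without proof, introducing it as a direct consequence of Theorems~\ref{theorem:ell1_grothendieck_equiv_no_fsjnp} and~\ref{theorem:inverse_fsjnp} together with the Koppelberg duality between minimally generated Boolean algebras and inverse systems of simple extensions (mentioned just before Theorem~\ref{theorem:pbn}). Your write-up simply spells out the chain of implications that the paper leaves implicit.
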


As a corollary to Theorem \ref{theorem:inverse_fsjnp} we also obtain that many Efimov spaces constructed in the literature have the fsJNP.

\begin{corollary}\label{cor:efimov_fsjnp}
If $K$ is an Efimov space obtained as the limit of an inverse system of simple extensions, then $K$ has the fsJNP. In particular, the examples of Efimov spaces by Fedorchuk (under $\diamondsuit$; see \cite{Fed76}), Dow and Pichardo-Mendoza (under the Continuum Hypothesis; see \cite{DPM09}), or Dow and Shelah (under Martin's axiom; see \cite{DS13}) have the fsJNP.\noproof
\end{corollary}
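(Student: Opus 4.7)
The plan is to observe that this corollary is a direct consequence of Theorem \ref{theorem:inverse_fsjnp} combined with a small amount of reference-checking, so essentially no new argument is required. First I would note that an Efimov space is, by definition, an infinite compact Hausdorff space (it contains neither a non-trivial convergent sequence nor a copy of $\bo$, but such restrictions are imposed precisely on infinite compacta). Therefore the hypothesis ``$K$ is an Efimov space obtained as the limit of an inverse system of simple extensions'' subsumes the hypotheses of Theorem \ref{theorem:inverse_fsjnp}, which immediately yields that $K$ has the fsJNP. This takes care of the first sentence.

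For the ``In particular'' clause, my plan is to verify, for each of the three cited constructions, that the Efimov space produced is presented in its original source as the limit of an inverse system of simple extensions in the sense of Definition \ref{def:inv_sys_simple_ext}. In Fedorchuk's construction under $\diamondsuit$ in \cite{Fed76}, the space is built by a continuous transfinite recursion of length $\omega_1$ in which each successor stage adjoins exactly one ``split point,'' i.e.\ the fiber over a single chosen point becomes a doubleton while the map is a bijection elsewhere; this is exactly a simple extension. The Dow--Pichardo-Mendoza construction in \cite{DPM09} under CH and the Dow--Shelah construction in \cite{DS13} under Martin's axiom are likewise transfinite continuous inverse limits where each successor step is a one-point splitting, so they fit Definition \ref{def:inv_sys_simple_ext} verbatim. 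Once each construction is identified as such a system, Theorem \ref{theorem:inverse_fsjnp} applies and delivers the fsJNP.

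The only real ``obstacle,'' and it is a bookkeeping one rather than a mathematical one, is that the three cited papers do not necessarily phrase their constructions using the exact terminology ``inverse system of simple extensions''; some use the dual Boolean-algebraic language of minimally generated algebras, and others describe the extensions by specifying the new ultrafilter to be split at each successor stage. So the step I would spend the most care on is translating each construction into the inverse-system language of Definition \ref{def:inv_sys_simple_ext}, in particular checking continuity at limit stages and checking that the added preimage at each successor has cardinality exactly two. Once that translation is in place, no further work is needed, and the corollary follows by a single invocation of Theorem \ref{theorem:inverse_fsjnp}.
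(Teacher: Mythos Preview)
Your proposal is correct and matches the paper's approach exactly: the paper presents this corollary with no proof at all (marked \noproof), treating it as an immediate consequence of Theorem \ref{theorem:inverse_fsjnp} together with the fact that the cited constructions are by design limits of inverse systems of simple extensions. Your additional remarks about translating from Boolean-algebraic or ultrafilter-splitting language into the inverse-system framework are accurate and harmless, but the paper does not spell this out either.
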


Let us note that consistently there exist Efimov spaces with the Grothendieck property and hence without the fsJNP, see e.g. \cite{Tal80}, \cite{Bre06}, or \cite{SZ19}.

In the next section we will prove that some other classes of Efimov spaces do have the fsJNP, too.

\subsection{$\tau$-simple extensions\label{section:tau_simple_ext}}

The aim of this section is to generalize Theorem \ref{theorem:inverse_fsjnp} to a broader class of inverse systems of compact spaces (however of length at most $\frakc$). We start with the following simple observations.

\begin{lemma}\label{lemma:simple_ext_boundaries}
Let $\big\langle K_\alpha,\pi_\alpha^\beta\colon\alpha\le\beta\le\delta\big\rangle$ be an inverse system of simple extensions. For every $\alpha<\delta$ and every subset $X\sub K_{\alpha+1}$ we have $\partial\pi_\alpha^{\alpha+1}[X]\sm\pi_\alpha^{\alpha+1}[\partial X]\sub\big\{x_\alpha\big\}$.
\end{lemma}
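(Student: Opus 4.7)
The plan is to fix $\alpha<\delta$, abbreviate $\pi=\pi_\alpha^{\alpha+1}$, and analyze what obstructs a boundary point $y\in\partial\pi[X]$ from being the image of a boundary point of $X$. Pick $y\in\partial\pi[X]$ with $y\neq x_\alpha$; by the defining property of a simple extension, the fibre $\pi^{-1}(y)$ is a singleton $\{z\}$. I will show $z\in\partial X$, which immediately gives $y=\pi(z)\in\pi[\partial X]$ and hence the stated inclusion.

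The argument splits into checking the two halves of $\partial X=\overline{X}\cap\overline{K_{\alpha+1}\setminus X}$. Throughout I use that $\pi$, being a continuous surjection between compact Hausdorff spaces, is a closed map, so that $\pi[\overline{X}]=\overline{\pi[X]}$. The easier half is $z\in\overline{X}$: from $y\in\overline{\pi[X]}=\pi[\overline{X}]$ I get some $z'\in\overline{X}$ with $\pi(z')=y$, and the singleton fibre forces $z'=z$.

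For $z\in\overline{K_{\alpha+1}\setminus X}$ I argue by contradiction. If $z$ had an open neighbourhood $U\subseteq X$, then, since $\pi^{-1}(y)=\{z\}\subseteq U$, the closed set $\pi[K_{\alpha+1}\setminus U]$ would miss $y$; hence $V=K_\alpha\setminus\pi[K_{\alpha+1}\setminus U]$ would be an open neighbourhood of $y$. For any $v\in V$ the fibre $\pi^{-1}(v)$ lies in $U\subseteq X$, and surjectivity of $\pi$ makes it non-empty, so $v\in\pi[X]$. Thus $V\subseteq\pi[X]$, contradicting $y\in\partial\pi[X]\subseteq\overline{K_\alpha\setminus\pi[X]}$.

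The only subtlety is the interplay between the closedness of $\pi$ and the fact that the singleton fibre at $y$ lets us pull a neighbourhood of $z$ down to a neighbourhood of $y$ inside $\pi[X]$; everything else is bookkeeping. The hypothesis $y\neq x_\alpha$ enters exactly once, to guarantee $|\pi^{-1}(y)|=1$, which is what makes the closed-map argument produce an honest neighbourhood of $y$ in both steps.
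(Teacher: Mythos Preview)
Your proof is correct. Both arguments exploit the same underlying fact---that the fibre $\pi^{-1}(y)$ is a singleton whenever $y\neq x_\alpha$---but the execution differs. The paper separates $y$ from $x_\alpha$ by a clopen set $V\subseteq K_\alpha$, restricts $X$ to the preimage of $V^c$, and then observes that $\pi$ is a homeomorphism between $K_{\alpha+1}\setminus\pi^{-1}[V]$ and $K_\alpha\setminus V$, so boundaries are preserved exactly on that piece. You instead work pointwise: the singleton fibre at $y$ together with closedness of $\pi$ lets you pull neighbourhoods back and forth directly, without ever invoking a clopen separation. Your route is slightly more elementary in that it does not use total disconnectedness of $K_\alpha$; the paper's route is a bit more conceptual, reducing the question to the trivial case of a homeomorphism.
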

\begin{proof}
Fix $\alpha<\delta$ and a subset $X\sub K_{\alpha+1}$. For the sake of contradiction, assume there is $x\in\partial\pi_\alpha^{\alpha+1}[X]\sm\pi_\alpha^{\alpha+1}[\partial X]$ such that $x\neq x_\alpha$. Let $V$ be a clopen subset of $K_\alpha$ such that $x_\alpha\in V$ but $x\not\in V$. Put $X'=X\sm\big(\pi_\alpha^{\alpha+1}\big)^{-1}[V]$. Since $\big(\pi_\alpha^{\alpha+1}\big)^{-1}[V]$ is closed, $x\in\partial\pi_\alpha^{\alpha+1}[X']\sm\pi_\alpha^{\alpha+1}[\partial X']$. But, as $K_{\alpha+1}\sm\pi_\alpha^{\alpha+1}[V]$ is homeomorphic to $K_\alpha\sm V$, we have $\partial\pi_\alpha^{\alpha+1}[X']=\pi_\alpha^{\alpha+1}[\partial X']$, a contradiction.
\end{proof}

Recall that a continuous surjection $f\colon X\to Y$ between two topological spaces $X$ and $Y$ is \textit{irreducible} if $f[A]\neq Y$ for every closed proper subset $A$ of $X$.

\begin{lemma}\label{lemma:simple}
Let $\big\langle K_\alpha,\pi_\alpha^\beta\colon\alpha\le\beta\le\delta\big\rangle$ be an infinite inverse system of simple extensions such that for every $\alpha\ge\omega$ the space $K_\alpha$ has no isolated points. Then, $\pi^\beta_\alpha$ is irreducible for any $\omega\le\alpha<\beta\le\delta$.
\end{lemma}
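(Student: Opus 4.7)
Fix $\alpha\ge\omega$ and proceed by transfinite induction on $\beta\in(\alpha,\delta]$. The base of the induction is trivial ($\pi^\alpha_\alpha=\Id$ is irreducible), so I concentrate on the successor and limit steps.

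The key observation for the successor step is that for every $\gamma\ge\omega$ the simple extension $\pi^{\gamma+1}_\gamma$ is itself irreducible. Indeed, suppose $A\subseteq K_{\gamma+1}$ is closed with $\pi^{\gamma+1}_\gamma[A]=K_\gamma$. Since every $y\in K_\gamma\setminus\{x_\gamma\}$ has a singleton preimage, $A$ contains $K_{\gamma+1}\setminus(\pi^{\gamma+1}_\gamma)^{-1}(x_\gamma)$. The complement of this set is the two-point fiber $(\pi^{\gamma+1}_\gamma)^{-1}(x_\gamma)$, which has empty interior because $K_{\gamma+1}$ has no isolated points (here we use $\gamma+1>\omega$). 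Hence $A$ contains a dense subset of $K_{\gamma+1}$, and being closed it equals $K_{\gamma+1}$. Now at a successor $\beta=\gamma+1$ with $\gamma\ge\alpha\ge\omega$, the inductive hypothesis supplies the irreducibility of $\pi^\gamma_\alpha$, and $\pi^{\gamma+1}_\alpha=\pi^\gamma_\alpha\circ\pi^{\gamma+1}_\gamma$ is irreducible as a composition of two irreducible continuous surjections, a standard fact one verifies by chasing a non-empty open set $U\subseteq K_{\gamma+1}\setminus A$ through the two stages.

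For the limit step, let $\beta$ be a limit ordinal with $\beta\le\delta$, let $A\subsetneq K_\beta$ be closed, and set $U=K_\beta\setminus A$. Since $K_\beta$ is the inverse limit of $\big\langle K_\gamma,\pi^\gamma_\eta\colon \eta\le\gamma<\beta\big\rangle$, the set $U$ contains a non-empty basic open subset of the form $\big(\pi^\beta_\gamma\big)^{-1}[V]$ for some $\alpha\le\gamma<\beta$ and some non-empty open $V\subseteq K_\gamma$. By the inductive hypothesis, $\pi^\gamma_\alpha$ is irreducible, so by the standard equivalent formulation of irreducibility there exists $x\in K_\alpha$ with $\big(\pi^\gamma_\alpha\big)^{-1}(x)\subseteq V$. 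Consequently,
\[
\big(\pi^\beta_\alpha\big)^{-1}(x)=\big(\pi^\beta_\gamma\big)^{-1}\Big[\big(\pi^\gamma_\alpha\big)^{-1}(x)\Big]\subseteq\big(\pi^\beta_\gamma\big)^{-1}[V]\subseteq U,
\]
so $x\notin\pi^\beta_\alpha[A]$ and $\pi^\beta_\alpha[A]\ne K_\alpha$, completing the induction.

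I do not anticipate a serious obstacle here. The two small points to be careful about are (a) verifying that the two-element fiber of a simple extension really has empty interior in $K_{\gamma+1}$, which is exactly where the hypothesis ``$K_\gamma$ has no isolated points for $\gamma\ge\omega$'' is used, and (b) locating a basic open subset of the required form inside $U$ at a limit stage, which is just the definition of the topology on an inverse limit. Lemma~\ref{lemma:simple_ext_boundaries} is not needed for this argument, although it records a closely related structural feature of simple extensions.
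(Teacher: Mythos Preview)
Your proof is correct and follows essentially the same strategy as the paper: reduce to the single-step case $\pi^{\gamma+1}_\gamma$ and handle the rest by continuity of the system (which you unpack explicitly as a transfinite induction with a limit step, whereas the paper simply invokes continuity). The only minor difference is in how the irreducibility of $\pi^{\gamma+1}_\gamma$ is verified: the paper appeals to the clopen characterization used in the proof of Proposition~\ref{prop:delavega_omega_simple} (specialized to $|G_\alpha|=1$), while you give a direct density argument using that the two-point fiber has empty interior in a perfect space---both are short and equivalent.
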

\begin{proof}
By the continuity of this inverse system, it is enough to prove that $\pi^{\alpha+1}_\alpha$ is irreducible for every $\alpha<\delta$. But this is fairly simple, and is actually proved in the first paragraph of the proof of Proposition \ref{prop:delavega_omega_simple}---one just needs to consider the case when $\big|G_\alpha\big|=1$.
\end{proof}

Let us also note that if $\big\langle
K_\alpha,\pi_\alpha^\beta\colon\alpha\le\beta\le\delta\big\rangle$
is an inverse system of simple extensions, then
$w\big(K_\alpha\big)=w\big(K_{\alpha+1}\big)$ for every
$\alpha<\delta$. (Here, $w(\cdot)$ denotes the weight of a space.) Motivated by these three
observations, we introduce the following generalization of systems
of simple extensions.

\begin{definition}\label{def:tau_simple_extensions}
Let $\tau\le\frakc$ be a cardinal number. An inverse system $\big\langle K_\alpha,\pi_\alpha^\beta\colon\alpha\le\beta\le\delta\big\rangle$ of totally disconnected compact spaces is \textit{a system of $\tau$-simple extensions} if
\begin{itemize}
    \item it is continuous,
    \item $K_0=2^\omega$ and each $K_\alpha$ is perfect, i.e. has no isolated
    points,
    \item for every $\alpha<\delta$ the space $K_{\alpha+1}$ is \textit{a $\tau$-simple extension of }$K_\alpha$, i.e. $\Big|\partial\pi_\alpha^{\alpha+1}[U]\sm\pi_\alpha^{\alpha+1}[\partial U]\Big|\le\tau$ for every closed subset $U\sub K_{\alpha+1}$,
    \item the map $\pi_\alpha^\beta$ is irreducible for every $\alpha<\beta\le\delta$,
    \item $w\big(K_\alpha\big)=w\big(K_{\alpha+1}\big)$ for every $\alpha<\delta$.
\end{itemize}
\end{definition}

Theorem \ref{theorem:tau_simple_extensions_fsjnp} states that systems of $\tau$-simple extensions of length at most $\frakc$ have the fsJNP. In order to show this, we need first to prove several technical results.

\begin{lemma}\label{lemma:image_boundary}
Let $K$ and $L$ be two compact spaces and $f\colon K\to L$ a continuous surjection. Assume that for a clopen subset $U\sub K$ the interior $\big(f[U]\cap f[K\sm U]\big)^\circ=\emptyset$. Then, $f[U]\cap f[K\sm U]=\partial f[U]\cup\partial f[K\sm U]$.
\end{lemma}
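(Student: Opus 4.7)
The plan is to translate the statement into a purely topological fact about two closed subsets of $L$ whose union covers $L$, and then prove it by elementary set manipulation. Concretely, set $A=f[U]$ and $B=f[K\sm U]$. Since $U$ is clopen in the compact space $K$, both $U$ and $K\sm U$ are compact, so $A$ and $B$ are closed subsets of $L$; and since $f$ is a surjection, $A\cup B=L$. The hypothesis on the interior then becomes the single extra assumption $(A\cap B)^\circ=\emptyset$.

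First I would establish the easy inclusion $\partial A\cup\partial B\sub A\cap B$, which does not use the interior hypothesis. From $A\cup B=L$ we get $L\sm B\sub A$, and $L\sm B$ being open yields $L\sm B\sub A^\circ$, equivalently $L\sm A^\circ\sub B$. For $x\in\partial A$ we have $x\in A\sm A^\circ\sub A\cap B$, so $\partial A\sub A\cap B$, and symmetrically $\partial B\sub A\cap B$.

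For the reverse inclusion I would argue by contradiction. Suppose $x\in A\cap B$ but $x\notin\partial A\cup\partial B$. Since $A$ and $B$ are closed, this forces $x\in A^\circ\cap B^\circ$. But $A^\circ\cap B^\circ$ is an open set contained in $A\cap B$, hence contained in $(A\cap B)^\circ$, which is empty by hypothesis, a contradiction. Therefore $A\cap B\sub\partial A\cup\partial B$, completing the equality.

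There is no real obstacle here: the lemma is a general topological identity about the trace of the boundaries of two closed sets whose union is the whole space. The only subtle point worth being careful about is verifying $L\sm A^\circ\sub B$, which is what makes $\partial A$ land in $B$; everything else is bookkeeping with interiors and boundaries of closed sets.
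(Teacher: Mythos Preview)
Your proof is correct and follows essentially the same approach as the paper: both arguments establish the two inclusions separately, using that $f[U]$ and $f[K\sm U]$ are closed with union $L$ for the inclusion $\partial f[U]\cup\partial f[K\sm U]\sub f[U]\cap f[K\sm U]$, and the empty-interior hypothesis for the reverse. Your abstraction to closed sets $A,B$ with $A\cup B=L$ makes the argument slightly more transparent, but the underlying logic is identical to the paper's neighborhood-based case analysis.
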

\begin{proof}
We have:
\[\partial f[U]=\ol{f[U]}\cap\ol{L\sm f[U]}=f[U]\cap\ol{L\sm f[U]}\sub\]
\[f[U]\cap\ol{f[K\sm U]}=f[U]\cap f[K\sm U],\]
where the only inclusion
follows from the surjectivity of $f$ and the last
equality from the closedness of $f$. We show
similarly that $\partial f[K\sm U]\sub f[U]\cap f[K\sm U]$, whence
we get:
\[\partial f[U]\cup\partial f[K\sm U]\sub f[U]\cap f[K\sm U].\]

Let now $x\in f[U]\cap f[K\sm U]$. By the assumption, for every open neighborhood $V$ of $x$ we have $V\not\subseteq f[U]\cap f[K\sm U]$, so either $V\sm f[U]\neq\emptyset$ or $V\sm f[K\sm U]\neq\emptyset$. If for every $V$ we have $V\sm f[U]\neq\emptyset$, then $x\in\partial f[U]$. 
So let us assume that there exists an open neighborhood $V$ of $x$ such that $V\sm f[U]=\emptyset$, equivalently $V\sub\big(f[U]\big)^\circ$. It follows that $x\in\partial f[K\sm U]$, since otherwise there is  an open neighborhood $W$ of $x$ such that $W\sm f[K\sm U]=\emptyset$, so $W\sub\big(f[K\sm U]\big)^\circ$, and hence:
\[x\in V\cap W\sub f[U]^\circ\cap f[K\sm U]^\circ=\big(f[U]\cap f[K\sm U]\big)^\circ,\]
a contradiction. We get thus:
\[f[U]\cap f[K\sm U]\sub \partial f[U]\cup\partial f[K\sm U].\]
\end{proof}

The next result, Proposition \ref{prop:transport_fsjn_seq}, shows how fsJN-sequences can be recovered from the Cantor space via continuous surjections. Note that if for every $n\io$, $i\in 2$, and $s\in 2^n$ we put $x_s^i=s\concat(i)$, where $(i)$ denotes the constant sequence of length $\omega$ all of whose members equal $i$, then the measures defined as
\[\mu_n=\frac{1}{2^{n+1}}\sum_{s\in 2^n}\big(\delta_{x_s^1}-\delta_{x_s^0}\big)\]
form an fsJN-sequence on the Cantor space $\Cantor$. Recall that $\lambda$ denotes the standard product measure on $\Cantor$.

\begin{proposition}\label{prop:transport_fsjn_seq}
Let $Y$ be a totally disconnected compact space and  $f\colon Y\to\Cantor$ a continuous surjection such that $\lambda\big(f[U]\cap f[Y\setminus U]\big)=0$ for every clopen $U\sub Y$. For every $n\io$, $i\in 2$ and $s\in 2^n$ fix $y_s^i\in f^{-1}\big(x^i_s\big)$ and define the measure on $Y$ as follows:
\[\nu_n=\frac{1}{2^{n+1}}\sum_{s\in 2^n}\big(\delta_{y^1_s}-\delta_{y^0_s}\big).\]
Then, the sequence $\seqn{\nu_n}$ is an fsJN-sequence on $Y$.
\end{proposition}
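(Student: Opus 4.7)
The plan is to check the two defining conditions of an fsJN-sequence separately. Normalization is immediate: the $2^{n+1}$ points $x_s^i=s\concat(i)\in\Cantor$ are pairwise distinct, hence so are the chosen $y_s^i\in f^{-1}(x_s^i)$, and summing the moduli of the coefficients of $\nu_n$ on its support gives $\|\nu_n\|=1$. For weak* convergence $\nu_n\to 0$, I would first record the commutation $f_*\nu_n=\mu_n$, where $\mu_n$ is the fsJN-sequence on $\Cantor$ displayed just before the statement, so that $\nu_n(h\circ f)=\mu_n(h)\to 0$ for every $h\in C(\Cantor)$. Since $Y$ is totally disconnected and $\sup_n\|\nu_n\|=1$, uniform density of clopen step functions in $C(Y)$ reduces the task to showing $\nu_n(\chi_U)\to 0$ for every clopen $U\sub Y$.

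Fix such a clopen $U$ and set $A=f[U]$, $B=f[Y\sm U]$; both are closed, $A\cup B=\Cantor$, and by hypothesis $\lambda(A\cap B)=0$. The qualitative observation driving the proof is that $\chi_U$ factors through $f$ off the $\lambda$-null set $A\cap B$: if $f(y)\in A\sm B$ then $f^{-1}(f(y))\sub U$ and so $\chi_U(y)=1$, while $f(y)\in B\sm A$ forces $\chi_U(y)=0$. Given $\eps>0$, I would write the open sets $A\sm B$ and $B\sm A$ as countable disjoint unions of clopens $\bigsqcup_k V_k$ and $\bigsqcup_k W_k$ in $\Cantor$, and pick truncation indices $N,M$ so that the clopen residue $T=\Cantor\sm\bigsqcup_{k\le N}V_k\sm\bigsqcup_{k\le M}W_k$ satisfies $\lambda(T)<\eps/2$.

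With $h=\sum_{k\le N}\chi_{V_k}\in C(\Cantor)$, the above factoring observation yields the pointwise inequality $|\chi_U(y_s^i)-h(x_s^i)|\le\chi_T(x_s^i)$ for every $(s,i)$, and therefore
\[\big|\nu_n(\chi_U)-\mu_n(h)\big|\le\tfrac{1}{2^{n+1}}\sum_{s\in 2^n,\,i\in 2}\chi_T(x_s^i).\]
The dyadic form $x_s^i=s\concat(i)$ now lets one verify directly that for every clopen $V\sub\Cantor$ depending only on finitely many coordinates, say on coordinates below some $m\le n$, one has $\mu_n(\chi_V)=0$ (the contributions from $i=0$ and $i=1$ cancel) and $\tfrac{1}{2^{n+1}}\sum_{s,i}\chi_V(x_s^i)=\lambda(V)$. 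Choosing $n$ larger than the defining depth of $T$ (which dominates the depths of $V_1,\ldots,V_N$), the right-hand side equals $\lambda(T)<\eps/2$ while $\mu_n(h)=0$, giving $|\nu_n(\chi_U)|<\eps$.

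The main obstacle is bridging the weak* convergence of $\mu_n$, which controls only \emph{continuous} test functions on $\Cantor$, with $\nu_n(\chi_U)$ for the discontinuous function $\chi_U$ on $Y$: the hypothesis $\lambda(f[U]\cap f[Y\sm U])=0$ is precisely what allows one to replace $\chi_U$ by the pullback of a $\{0,1\}$-valued clopen step function on $\Cantor$ up to an error supported on a $\lambda$-small clopen set, and the rigid dyadic structure of the grid $\{x_s^i\}$ is what keeps this error controllable after pairing with $\mu_n$.
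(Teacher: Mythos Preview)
Your argument is correct and follows essentially the same route as the paper's proof. Both proofs reduce to testing on clopen $U\sub Y$, cover the $\lambda$-null set $f[U]\cap f[Y\sm U]$ by a $\lambda$-small clopen in $\Cantor$ (your residue $T$ plays exactly the role of the paper's clopen $B$ obtained via Lemma~\ref{lemma:image_boundary} and outer regularity), and then exploit the dyadic structure of the grid $\{x_s^i\}$ to bound the error; your packaging via the comparison $|\nu_n(\chi_U)-\mu_n(h)|$ is a minor cosmetic reformulation of the paper's direct computation, and your construction of $T$ by truncating inner approximations of $A\sm B$ and $B\sm A$ lets you bypass the explicit appeal to Lemma~\ref{lemma:image_boundary}.
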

\begin{proof}
We need only to show that $\seqn{\nu_n(U)}$ converges to $0$ for every clopen $U\sub Y$. Fix $\eps>0$. Since $\lambda\big(f[U]\cap f[Y\setminus U]\big)=0$, it follows that $f[U]\cap f[Y\setminus U]$ has empty interior in $\Cantor$. By Lemma \ref{lemma:image_boundary}, there is a clopen set $B\sub\Cantor$ such that $\lambda(B)<\eps$ and
\[\partial f[U]\cup\partial f[Y\sm U]=f[U]\cap f[Y\setminus U]\sub B.\]
It follows that $f[U]\setminus B$ and $f[Y\setminus U]\setminus B$ are also clopen sets. Since
\[f^{-1}\big[f[U]\setminus B\big]\sub U\]
and
\[f^{-1}\big[f[Y\setminus U]\setminus B\big]\sub Y\setminus U,\]
we have that $y^i_s\in U$ if $x^i_s\in f[U]\setminus B$, and $y^i_s\in Y\setminus U$ if $x^i_s\in f[Y\setminus U]\setminus B$. Let $n_0\io$ be such that $x^0_s\in f[U]\setminus B$ if and only if $x^1_s\in f[U]\setminus B$, for all $n\ge n_0$ and $s\in 2^n$. Then,
\[\big|\nu_n(U)\big|=\Big|\frac{1}{2^{n+1}}\sum_{s\in 2^n}\big(\delta_{y^1_s}(U)-\delta_{y^0_s}(U)\big)\Big|=\]
\[\frac{1}{2^{n+1}}\Big|\sum_{s\in 2^n}\big(\delta_{y^1_s}(U)-\delta_{y^0_s}(U)\big)-\sum_{s\in 2^n}\big(\delta_{x^1_s}(f[U]\setminus B)-\delta_{x^0_s}(f[U]\setminus B)\big)\Big|=\]
\[\frac{1}{2^{n+1}}\Big|\sum_{s\in 2^n}\big(\delta_{y^1_s}(U)-\delta_{x^1_s}(f[U]\setminus B)\big)-\sum_{s\in 2^n}\big(\delta_{y^0_s}(U)-\delta_{x^0_s}(f[U]\setminus B)\big)\Big|\le\]
\[\frac{1}{2^{n+1}}\Big(\big|\sum_{f(y^1_s)\in B}\delta_{y^1_s}(U)\big|+\big|\sum_{f(y^0_s)\in B}\delta_{y^0_s}(U)\big|\Big)=\]
\[\frac{1}{2^{n+1}}\Big(\big|\sum_{x^1_s\in B}\delta_{y^1_s}(U)\big|+\big|\sum_{x^0_s\in B}\delta_{y^0_s}(U)\big|\Big)\le\]
\[\frac{1}{2^{n+1}}\Big(\big|\big\{s\in 2^n\colon x^1_s\in B\big\}\big|+\big|\big\{s\in 2^n\colon x^0_s\in B\big\}\big|\Big)\]
for all $n\geq n_0$.
Let $n_1\io$ be such that for every $n\ge n_1$ there exists $S_n\sub 2^n$ for which $B=\bigcup_{s\in S_n}[s]$.
Then $\lambda(B)=\big|S_n\big|/2^n<\eps$ for all $n\ge n_1$. Then, for all $n\ge\max\{n_0,n_1\}$ we have:
\[\big|\nu_n(U)\big|\le\frac{1}{2^{n+1}}\Big(\big|\big\{s\in 2^n\colon\ x^1_s\in B\big\}\big|+\big|\big\{s\in 2^n\colon\ x^0_s\in B\big\}\big|\Big)\le\]
\[\frac{1}{2^{n+1}}\cdot 2\cdot\big|S_n\big|=\frac{\big|S_n\big|}{2^n}<\eps,\]
which completes the proof.
\end{proof}

\begin{lemma}\label{lemma:size_boundaries_induction}
Fix three cardinal numbers $\delta,\kappa,\tau<\frakc$, where $\kappa$ is infinite. Assume that $\big\langle K_\alpha,\pi_\alpha^\beta\colon\alpha\le\beta\le\delta\big\rangle$ is an inverse system of $\tau$-simple extensions. Then, for any $\alpha<\beta\le\delta$
and every closed set $U\sub X_\beta$ such that $|\partial U|\le\kappa$, we have $\big|\partial\pi_\alpha^\beta[U]\big|\le|\beta|\cdot\tau\cdot\kappa$.
\end{lemma}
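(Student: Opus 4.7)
The plan is to prove the lemma by transfinite induction on $\beta$, establishing the bound for all $\alpha<\beta$ simultaneously.

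For the base case $\beta=\alpha+1$, the $\tau$-simple extension property immediately gives
\[\big|\partial\pi_\alpha^{\alpha+1}[U]\big|\le\big|\pi_\alpha^{\alpha+1}[\partial U]\big|+\tau\le\kappa+\tau\le\tau\cdot\kappa,\]
using that $\kappa$ is infinite. For the inductive successor step $\beta\to\beta+1$ with $\alpha<\beta$, I set $V:=\pi_\beta^{\beta+1}[U]$, so that $\pi_\alpha^{\beta+1}[U]=\pi_\alpha^\beta[V]$. The $\tau$-simple extension property yields $|\partial V|\le\kappa+\tau=:\kappa'$, still an infinite cardinal below $\frakc$. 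Applying the inductive hypothesis at $\beta$ with parameter $\kappa'$ gives $|\partial\pi_\alpha^\beta[V]|\le|\beta|\cdot\tau\cdot\kappa'$, and since $\kappa$ is infinite $\tau\cdot(\kappa+\tau)=\tau\cdot\kappa$, yielding the bound $|\beta|\cdot\tau\cdot\kappa\le|\beta+1|\cdot\tau\cdot\kappa$ as required.

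For the limit case ($\beta$ a limit ordinal, $\alpha<\beta$), the key observation is that for every $\gamma\in(\alpha,\beta)$, setting $V_\gamma:=\pi_\gamma^\beta[U]$, one has $\pi_\alpha^\beta[U]=\pi_\alpha^\gamma[V_\gamma]$, hence $\partial\pi_\alpha^\beta[U]=\partial\pi_\alpha^\gamma[V_\gamma]$. The inductive hypothesis at $\gamma$ then gives $|\partial\pi_\alpha^\beta[U]|\le|\gamma|\cdot\tau\cdot|\partial V_\gamma|$, so the task reduces to bounding $|\partial V_\gamma|$. The plan is to prove $|\partial V_\gamma|\le\kappa+|\beta|\cdot\tau$ by a tracing argument on the subsystem $\langle K_\eta,\pi_\eta^{\eta'}\colon\gamma\le\eta\le\eta'\le\beta\rangle$: for each $y\in\partial V_\gamma\sm\pi_\gamma^\beta[\partial U]$, irreducibility of $\pi_\gamma^\beta$ together with the closedness of $U$ forces the fiber $(\pi_\gamma^\beta)^{-1}(y)$ to contain both a point $z_0\in U$ and a point $z_1\in K_\beta\sm U$. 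Pick such a pair; the first divergence level of the projections of $z_0$ and $z_1$ must be a successor ordinal $\eta(y)+1$, because in a continuous inverse system any limit of levels of agreement is still a level of agreement. The common projection $w(y):=\pi_{\eta(y)}^\beta(z_0)=\pi_{\eta(y)}^\beta(z_1)$ lies in $\partial V_{\eta(y)}$ as a ``newly created'' boundary point at the $\tau$-simple extension from $K_{\eta(y)}$ to $K_{\eta(y)+1}$, which the $\tau$-simple property caps at $\tau$ per successor step. Since the assignment $y\mapsto(\eta(y),w(y))$ is injective (as $y=\pi_\gamma^{\eta(y)}(w(y))$), summing over the at most $|\beta|$ successor ordinals in $[\gamma,\beta)$ yields $|\partial V_\gamma|\le\kappa+|\beta|\cdot\tau$. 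Combining, $|\partial\pi_\alpha^\beta[U]|\le|\gamma|\cdot\tau\cdot(\kappa+|\beta|\cdot\tau)\le|\beta|\cdot\tau\cdot\kappa$ by cardinal arithmetic, using that $\kappa$ is infinite and $|\gamma|\le|\beta|$.

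The main obstacle is making this tracing rigorous in the limit case, in particular verifying that $w(y)$ may indeed be taken as a \emph{genuinely new} boundary point at level $\eta(y)$, so that it is counted against the $\tau$-simple extension bound rather than inherited from $\pi_{\eta(y)}^{\eta(y)+1}[\partial V_{\eta(y)+1}]$. The delicate point is that the pair $(z_0,z_1)$ is not unique and the choice of $\eta(y)$ among possible divergence levels must be made canonically (for instance by taking $\eta(y)$ to be a maximal rather than minimal such ordinal, localizing the split to a ``terminal'' successor step) so as to avoid double counting or missing contributions; once this combinatorial bookkeeping is settled, the bound follows from the cardinal arithmetic displayed above.
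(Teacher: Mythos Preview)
Your base case and successor step are correct and essentially identical to the paper's. The limit case, however, has a genuine gap, which you yourself flag but do not close.

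The concrete problem is the claim that $w(y)=\pi_{\eta(y)}^\beta(z_0)=\pi_{\eta(y)}^\beta(z_1)$ lies in $\partial V_{\eta(y)}$. From $z_0\in U$ and $z_1\notin U$ you only get $w(y)\in V_{\eta(y)}\cap\pi_{\eta(y)}^\beta[K_\beta\sm U]$; this does \emph{not} force $w(y)\notin V_{\eta(y)}^\circ$. If $w(y)\in V_{\eta(y)}^\circ$, the contradiction with $y\in\partial V_\gamma$ would require the whole fiber $(\pi_\gamma^{\eta(y)})^{-1}(y)$ to sit inside $V_{\eta(y)}^\circ$, and nothing in your construction guarantees that. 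Even granting $w(y)\in\partial V_{\eta(y)}$, you give no argument that $w(y)\notin\pi_{\eta(y)}^{\eta(y)+1}[\partial V_{\eta(y)+1}]$, i.e., that it is one of the $\le\tau$ ``new'' points rather than inherited. Your suggestion to take $\eta(y)$ ``maximal'' does not help as stated: maximal over what set, and why should a maximum exist at limit levels? The telescoping $\partial V_\eta\sub\pi_\eta^{\eta+1}[\partial V_{\eta+1}]\cup N_\eta$ (with $|N_\eta|\le\tau$) unwinds through successors but gives no control at limit $\eta$, so the bookkeeping does not close.

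The paper bypasses all of this with a different idea in the limit case. It uses the weight bound $w(K_\beta)\le|\beta|+\omega$ (coming from $K_0=2^\omega$ and the clause $w(K_\alpha)=w(K_{\alpha+1})$ in the definition of $\tau$-simple extensions) to write the closed set $\partial U$ as $\bigcap_{\iota<|\beta|\cdot\kappa}A_\iota$ with $A_\iota$ clopen and the family closed under finite intersections. Each $U\sm A_\iota$ is then \emph{clopen} (it is closed and contained in $U^\circ$, hence equals $U^\circ\cap(K_\beta\sm A_\iota)$), so by continuity of the system it factors as $(\pi_{\xi_\iota}^\beta)^{-1}[B_\iota]$ for some $\xi_\iota<\beta$ and clopen $B_\iota\sub K_{\xi_\iota}$. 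One checks
\[\partial\pi_\alpha^\beta[U]\ \sub\ \pi_\alpha^\beta[\partial U]\ \cup\ \bigcup_{\iota<|\beta|\cdot\kappa}\partial\pi_\alpha^{\xi_\iota}[B_\iota],\]
and applies the inductive hypothesis at each $\xi_\iota<\beta$ with the same $\kappa$ (since $\partial B_\iota=\emptyset$). The passage to clopen pieces kills the boundary at the top level and pushes the induction strictly below $\beta$, avoiding any need to trace individual boundary points through the system.
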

\begin{proof}
Let us first observe that the case $\beta=\alpha+1$, where $\alpha<\delta$, follows immediately from Definition \ref{def:tau_simple_extensions}, thus we need only to prove the case where $\alpha+1<\beta$. Fix $\alpha<\delta$. The proof is by induction on $\beta>\alpha$. Let us thus fix also $\beta\le\delta$ and assume that the thesis holds for every $\alpha<\xi<\beta$. We have two cases:
\begin{enumerate}
    \item $\beta=\xi+1$ for some $\alpha<\xi<\delta$. Then, by the beginning remark, $\big|\partial\pi_\xi^\beta[U]\big|\le|\beta|\cdot\tau\cdot\kappa$. By the inductive assumption used for an ordinal number $\xi$, a closed set $\pi_\xi^\beta[U]$, and the cardinal $|\beta|\cdot\tau\cdot\kappa$, we conclude that:
\[\big|\partial\pi_\alpha^\beta[U]\big|=\big|\partial\pi^\xi_\alpha\big[\pi_\xi^\beta[U]\big]\big|\le|\xi|\cdot\tau\cdot|\beta|\cdot\tau\cdot\kappa=|\beta|\cdot\tau\cdot\kappa.\]
    \item $\beta$ is limit. First note that $w\big(K_\beta\big)\le|\beta|+\omega\le|\beta|\cdot\kappa$, because the inverse system is based on $\tau$-simple extensions. It follows that $\partial U=\bigcap_{\iota<|\beta|\cdot\kappa}A_\iota$ for some family $\big\{A_\iota\colon\ \iota<|\beta|\cdot\kappa\big\}$ of clopen subsets of $K_\beta$, which is closed under finite intersections of its elements. Then,
\[\pi^\beta_\alpha[U\setminus\partial U]=\bigcup_{\iota<|\beta|\cdot\kappa}\pi^\beta_\alpha\big[U\setminus A_\iota\big].\]
We now claim that
\[\tag{$*$}\partial\pi^\beta_\alpha[U]\sub\bigcup_{\iota<|\beta|\cdot\kappa}\partial\pi^\beta_\alpha\big[U\setminus A_\iota\big]\cup\pi^\beta_\alpha[\partial U].\]
To see this, fix $x\in\partial\pi^\beta_\alpha[U]\setminus\pi^\beta_\alpha[\partial U]$ and note that
$\big(\pi^\beta_\alpha\big)^{-1}(x)\cap U\sub U^\circ$, and hence there exists $\iota<|\beta|\cdot\kappa$ such
that $\big(\pi^\beta_\alpha\big)^{-1}(x)\cap U\sub U\setminus A_\iota$. It follows that $x\in\pi^\beta_\alpha\big[U\setminus A_\iota\big]$, and hence $x\in \partial\pi^\beta_\alpha\big[U\setminus A_\iota\big]$, because otherwise $x\in\big(\pi^\beta_\alpha\big[U\setminus A_\iota\big]\big)^\circ\sub\big(\pi^\beta_\alpha[U]\big)^\circ$, thus contradicting $x\in\partial\pi^\beta_\alpha[U]$.

Since for every $\iota<|\beta|\cdot\kappa$ the set $U\setminus A_\iota$ is clopen in $K_\beta$, for every $\iota<|\beta|\cdot\kappa$ there are $\xi_\iota\in\beta\setminus\alpha$ and clopen $B_\iota\sub K_{\xi_\iota}$ such that $U\setminus A_\iota=\big(\pi^\beta_{\xi_\iota}\big)^{-1}\big[B_\iota\big]$, and hence $\pi^\beta_\alpha\big[U\setminus A_\iota\big]=\pi^{\xi_\iota}_\alpha\big[B_\iota\big]$ for all $\iota<|\beta|\cdot\kappa$. It follows from our inductive assumption that
\[\big|\partial\pi^\beta_\alpha\big[U\setminus A_\iota\big]\big|=\big|\partial\pi^{\xi_\iota}_\alpha\big[B_\iota\big]\big|\le\big|\xi_i\big|\cdot\tau\cdot\kappa\le|\beta|\cdot\tau\cdot\kappa,\]
and hence we conclude from ($*$) that
\[\big|\partial\pi^\beta_\alpha[U]\big|\le|\beta|\cdot\kappa\cdot|\beta|\cdot\tau\cdot\kappa+\kappa\le|\beta|\cdot\tau\cdot\kappa,\]
which completes our proof.
\end{enumerate}
\end{proof}

We are in the position to prove the main theorem of this section.

\begin{theorem}\label{theorem:tau_simple_extensions_fsjnp}
Let $\tau<\frakc$ be a cardinal number. Assume that $\big\langle K_\alpha,\pi_\alpha^\beta\colon\alpha\le\beta\le\delta\big\rangle$ is an inverse system of $\tau$-simple extensions with $\delta\le\frakc$. Then, $K_\delta$ has the fsJNP.
\end{theorem}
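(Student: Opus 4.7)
The plan is to apply Proposition~\ref{prop:transport_fsjn_seq} to the canonical projection $f = \pi_0^\delta \colon K_\delta \to K_0 = 2^\omega$. By Definition~\ref{def:tau_simple_extensions} the map $f$ is irreducible, hence a continuous surjection onto the Cantor space, and the only hypothesis left to verify is that $\lambda\big(f[U]\cap f[K_\delta\setminus U]\big)=0$ for every clopen $U\subset K_\delta$, where $\lambda$ denotes the Haar measure on $2^\omega$.

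The first step is to observe that $f[U]\cap f[K_\delta\setminus U]$ has empty interior. If $V\subset K_0$ were a nonempty open subset of this intersection, then in particular $V\subset f[U]$, so $f^{-1}[V]\cap U$ would be nonempty and open in $K_\delta$; by irreducibility of $f$ there would exist a nonempty open $V'\subset K_0$ with $f^{-1}[V']\subset f^{-1}[V]\cap U$, and by surjectivity of $f$ one checks that $V'\subset V$. But then $V'\cap f[K_\delta\setminus U]=\emptyset$ (because $f^{-1}[V']\subset U$) while $V'\subset V\subset f[K_\delta\setminus U]$, a contradiction. Lemma~\ref{lemma:image_boundary} then gives
\[f[U]\cap f[K_\delta\setminus U]=\partial f[U]\cup\partial f[K_\delta\setminus U].\]

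The next step is to bound $|\partial f[U]|$ (and analogously $|\partial f[K_\delta\setminus U]|$) using Lemma~\ref{lemma:size_boundaries_induction}, splitting on the form of $\delta$. If $\delta=\gamma+1$ is a successor, then $\gamma<\mathfrak{c}$; since $U$ is clopen we have $\partial U=\emptyset$, so the $\tau$-simple extension condition yields $\big|\partial\pi_\gamma^\delta[U]\big|\le\tau$, and Lemma~\ref{lemma:size_boundaries_induction} applied to the closed set $\pi_\gamma^\delta[U]\subset K_\gamma$ with $\kappa=\tau+\omega$ gives
\[\big|\partial f[U]\big|=\big|\partial\pi_0^\gamma\big[\pi_\gamma^\delta[U]\big]\big|\le|\gamma|\cdot\tau\cdot(\tau+\omega)<\mathfrak{c}.\]
If $\delta$ is a limit ordinal, then by the standard fact that clopen subsets of a continuous inverse limit of compact zero-dimensional spaces are determined at some earlier stage, one may write $U=(\pi_\alpha^\delta)^{-1}[V]$ for some $\alpha<\delta$ and clopen $V\subset K_\alpha$; applying Lemma~\ref{lemma:size_boundaries_induction} to $V$ (with $\partial V=\emptyset$ and $\kappa=\omega$) yields
\[\big|\partial f[U]\big|=\big|\partial\pi_0^\alpha[V]\big|\le|\alpha|\cdot\tau\cdot\omega<\mathfrak{c},\]
because $\alpha<\delta\le\mathfrak{c}$ forces $|\alpha|<\mathfrak{c}$ while $\tau<\mathfrak{c}$ by hypothesis.

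Finally, $\partial f[U]$ is a closed subset of the Polish space $2^\omega$ of cardinality strictly less than $\mathfrak{c}$, and hence countable by the perfect set theorem; since $\lambda$ is non-atomic, $\lambda\big(\partial f[U]\big)=0$, and likewise $\lambda\big(\partial f[K_\delta\setminus U]\big)=0$. Combining this with the identification of the intersection as a union of boundaries, one concludes $\lambda\big(f[U]\cap f[K_\delta\setminus U]\big)=0$, and Proposition~\ref{prop:transport_fsjn_seq} produces an fsJN-sequence on $K_\delta$. The crux of the whole argument is the last implication: a cardinality bound alone does not furnish measure zero, and it is the closedness of the boundaries together with the perfect set theorem for Polish spaces that lets one cross from $|\cdot|<\mathfrak{c}$ to $\lambda=0$.
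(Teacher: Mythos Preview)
Your proof is correct and follows essentially the same route as the paper: verify the measure-zero hypothesis of Proposition~\ref{prop:transport_fsjn_seq} by showing that $\pi_0^\delta[U]\cap\pi_0^\delta[K_\delta\setminus U]$ has cardinality $<\mathfrak{c}$ (via irreducibility, Lemma~\ref{lemma:image_boundary}, and Lemma~\ref{lemma:size_boundaries_induction}), and then invoke the perfect set theorem for closed subsets of $2^\omega$. The only difference is cosmetic: you split on whether $\delta$ is a successor or a limit, whereas the paper splits on $\delta<\mathfrak{c}$ versus $\delta=\mathfrak{c}$; your successor case also takes an unnecessary detour through $K_\gamma$ (first applying the $\tau$-simple condition, then Lemma~\ref{lemma:size_boundaries_induction} with $\kappa=\tau+\omega$) when one can simply apply Lemma~\ref{lemma:size_boundaries_induction} directly to the clopen $U\subset K_\delta$ with $\kappa=\omega$, since $\delta=\gamma+1<\mathfrak{c}$.
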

\begin{proof}
Since $\pi^\beta_\alpha$ are irreducible for any $\alpha<\beta\leq\delta$, for every clopen $U\sub K_\beta$ we have $\big(\pi^\beta_\alpha[U]\cap\pi^\beta_\alpha\big[K_\beta\setminus U\big]\big)^\circ=\emptyset$ and hence, by Lemma \ref{lemma:image_boundary},
\[\pi^\beta_\alpha[U]\cap\pi^\beta_\alpha\big[K_\beta\setminus U\big]=\partial\pi^\beta_\alpha[U]\cup\partial\pi^\beta_\alpha\big[K_\beta\setminus U\big].\]

Let $U\sub K_\delta$ be clopen. If $\delta<\frakc$, then it follows from the above equality and Lemma \ref{lemma:size_boundaries_induction} (with $\kappa=\omega$---note that $|\partial U|=0$) that
\[\big|\pi^\delta_0[U]\cap \pi^\delta_0\big[K_\delta\setminus U\big]\big|\le|\delta|\cdot\tau\cdot\omega<\frakc.\]
If $\delta=\frakc$, then $U=\big(\pi^\delta_\beta\big)^{-1}[W]$ for some $\beta<\delta$ and clopen $W\sub K_\beta$, and hence, again by Lemma \ref{lemma:size_boundaries_induction},
\[\big|\pi^\delta_0[U]\cap\pi^\delta_0\big[K_\delta\setminus U\big]\big|=\big|\pi^\beta_0[W]\cap\pi^\beta_0\big[K_\delta\setminus W\big]\big|\le|\beta|\cdot\tau\cdot\omega<\frakc.\]
Thus, $\big|\pi^\delta_0[U]\cap\pi^\delta_0\big[K_\delta\setminus U\big]\big|<\frakc$ in any case. Since
$\pi^\delta_0[U]\cap \pi^\delta_0[X_\delta\setminus U]$ is a closed subset of $\Cantor$ of size $<\frakc$, we conclude that
it is countable, and hence it must have Lebesgue measure $0$. It remains to apply Proposition \ref{prop:transport_fsjn_seq} for $Y=K_\delta$.
\end{proof}

Rephrasing the theorem, we get that the limits of inverse systems of $\tau$-simple extensions of length at most $\frakc$ do not have the $\ell_1$-Grothendieck property, which generalizes Corollary \ref{cor:min_gen_no_ell_1_gr}.

The assumption in Theorem \ref{theorem:tau_simple_extensions_fsjnp} that the ordinal number $\delta$ is not greater than $\frakc$ seems to be essential as it allows us to appeal to Proposition \ref{prop:transport_fsjn_seq} in order to ``transport'' the fsJN-sequence $\seqn{\mu_n}$ from the Cantor space onto $K_\delta$. We do not know whether the conclusion of the theorem holds true without this assumption.

\begin{question}\label{ques:tau_simple_extensions_fsjnp_delta}
Let $\tau<\frakc$ be a cardinal number. Assume that $\big\langle K_\alpha,\pi_\alpha^\beta\colon\alpha\le\beta\le\delta\big\rangle$ is an inverse system of $\tau$-simple extensions with $\delta>\frakc$. Does $K_\delta$ necessarily have the fsJNP?
\end{question}

As an application of Theorem \ref{theorem:tau_simple_extensions_fsjnp}, we will show that some special Efimov spaces, studied by de la Vega in \cite{dlV04} and \cite{dlV05}, have the fsJNP, too.

\begin{definition}\label{def:de_la_vega_system}
A continuous inverse system $\big\langle K_\alpha,\pi_\alpha^\beta\colon\alpha\le\beta\le\omega_1\big\rangle$ of compact spaces such that $K_0=\Cantor$ and for every $\alpha<\omega_1$ the space $K_\alpha$ is homeomorphic to the space $\Cantor$ as well as there exist:
\begin{itemize}
    \item closed subsets $A_\alpha^0,A_\alpha^1\sub K_\alpha$ and a point $p_\alpha\in K_\alpha$ such that $K_\alpha=A_\alpha^0\cup A_\alpha^1$ and $A_\alpha^0\cap A_\alpha^1=\big\{p_\alpha\big\}$, and
    \item a countable group $G_\alpha$ acting on $K_\alpha$ freely, i.e. $gx\neq x$ for any $x\in K_\alpha$ and $g\in G_\alpha\setminus\big\{e_\alpha\big\}$, where $e_\alpha\in G_\alpha$ is the group identity, such that
	\[K_{\alpha+1}=\big\{(x,\phi)\in K_\alpha\times 2^{G_\alpha}\colon\ x\in gA_\alpha^{\phi(g)}\text{ for every }g\in G_\alpha\big\}\]
	and $\pi^{\alpha+1}_\alpha\big((x,\phi)\big)=x$,
\end{itemize}
is called \textit{a de la Vega system}.
\end{definition}

Assuming Jensen's $\diamondsuit$, de la Vega \cite{dlV04,dlV05} used such systems to construct hereditarily separable Efimov spaces with various homogeneity properties. Below we show that de la Vega systems are based on $\omega$-simple extensions and hence that their limits $K_{\omega_1}$ have the fsJNP.


\begin{proposition}\label{prop:delavega_omega_simple}
Every de la Vega system is based on $\omega$-simple extensions.
\end{proposition}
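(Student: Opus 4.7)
The plan is to verify directly the five conditions of Definition \ref{def:tau_simple_extensions} for $\tau = \omega$. Continuity, the requirement $K_0 = \Cantor$, the fact that each $K_\alpha$ with $\alpha < \omega_1$ is homeomorphic to $\Cantor$ and is thus perfect, and the weight equality $w(K_\alpha) = w(K_{\alpha+1}) = \omega$ for $\alpha < \omega_1$ are all immediate from Definition \ref{def:de_la_vega_system}. Perfectness of the limit $K_{\omega_1}$ follows from the irreducibility of the bonding maps (established below) together with the perfectness of each $K_\alpha$, $\alpha < \omega_1$. So the main work lies in establishing irreducibility of the $\pi^\beta_\alpha$ and the $\omega$-simple extension property of each $\pi^{\alpha+1}_\alpha$.

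The key preliminary step is a fiber analysis of $\pi^{\alpha+1}_\alpha$. For $x \in K_\alpha$ and $g \in G_\alpha$, the relation $x \in gA_\alpha^{\phi(g)}$ is equivalent to $g^{-1}x \in A_\alpha^{\phi(g)}$. Since $A_\alpha^0 \cap A_\alpha^1 = \{p_\alpha\}$ and $A_\alpha^0 \cup A_\alpha^1 = K_\alpha$, the value $\phi(g)$ is uniquely determined whenever $g^{-1}x \neq p_\alpha$ and is free to be either $0$ or $1$ precisely when $g^{-1}x = p_\alpha$. Because $G_\alpha$ acts freely on $K_\alpha$, there is at most one such $g$. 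Setting $D_\alpha := \{gp_\alpha : g \in G_\alpha\}$ (a countable set), I conclude that $\big|(\pi^{\alpha+1}_\alpha)^{-1}(x)\big| = 2$ when $x \in D_\alpha$ and equals $1$ when $x \in K_\alpha \sm D_\alpha$.

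For irreducibility of $\pi^{\alpha+1}_\alpha$, let $U \subsetneq K_{\alpha+1}$ be closed and set $V := K_{\alpha+1} \sm U$. Since $K_{\alpha+1}$ is homeomorphic to $\Cantor$ and thus perfect, $V$ is uncountable, while $(\pi^{\alpha+1}_\alpha)^{-1}[D_\alpha]$ is countable. Picking $(x,\phi) \in V \sm (\pi^{\alpha+1}_\alpha)^{-1}[D_\alpha]$, the fiber over $x$ is the singleton $\{(x,\phi)\} \sub V$, so $x \notin \pi^{\alpha+1}_\alpha[U]$. Irreducibility of $\pi^\beta_\alpha$ for all $\alpha < \beta \le \omega_1$ then follows from continuity of the inverse system by the standard composition/limit argument referenced in Lemma \ref{lemma:simple}.

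The most delicate step is verifying the $\omega$-simple extension condition. Fix a closed $U \sub K_{\alpha+1}$ and pick $x \in \partial\pi^{\alpha+1}_\alpha[U] \sm \pi^{\alpha+1}_\alpha[\partial U]$; I claim $x$ must lie in $D_\alpha$, which then gives the desired bound $\big|\partial\pi^{\alpha+1}_\alpha[U] \sm \pi^{\alpha+1}_\alpha[\partial U]\big| \le |D_\alpha| \le \omega$. Suppose instead that $x \notin D_\alpha$, so the fiber is a singleton $\{(x,\phi)\}$. Since $x \in \pi^{\alpha+1}_\alpha[U]$ while $x \notin \pi^{\alpha+1}_\alpha[\partial U]$, the point $(x,\phi)$ lies in $U \sm \partial U = U^\circ$. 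Choose a clopen $W \ni (x,\phi)$ with $W \sub U$, and consider $V' := K_\alpha \sm \pi^{\alpha+1}_\alpha[K_{\alpha+1} \sm W]$, which is open by closedness of $\pi^{\alpha+1}_\alpha$. A point $y$ lies in $V'$ iff $(\pi^{\alpha+1}_\alpha)^{-1}(y) \sub W$; since the fiber of $x$ is $\{(x,\phi)\} \sub W$, I obtain $x \in V' \sub \pi^{\alpha+1}_\alpha[W] \sub \pi^{\alpha+1}_\alpha[U]$, contradicting $x \in \partial\pi^{\alpha+1}_\alpha[U]$. I expect this closed-map argument to be the main subtlety to execute cleanly; everything else reduces to elementary manipulation of the countable branching set $D_\alpha$.
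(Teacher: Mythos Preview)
Your proof is correct and shares the paper's central insight: the countable orbit $D_\alpha=\{gp_\alpha:g\in G_\alpha\}$ is exactly the set of points with non-singleton $\pi^{\alpha+1}_\alpha$-fibers, and everything reduces to this. The technical execution, however, differs from the paper's. For irreducibility, the paper uses the characterization ``for every clopen $U$ there is a clopen $B$ with $f^{-1}[B]\sub U$'' and works explicitly with basic clopens of the form $([s]\times[t])\cap K_{\alpha+1}$, exhibiting a concrete $B$ inside $\bigcap_{i\le n}g_iA_\alpha^{t(g_i)}\cap[s]$ avoiding the finitely many $g_ip_\alpha$. You instead argue by cardinality: a nonempty open set in the perfect space $K_{\alpha+1}$ is uncountable, while the preimage of $D_\alpha$ is countable, so some singleton fiber misses $U$. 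For the $\omega$-simple bound, the paper again reuses its explicit-$B$ construction, whereas you invoke the generic closed-map ``small image'' set $V'=K_\alpha\sm\pi^{\alpha+1}_\alpha[K_{\alpha+1}\sm W]$. Your route is a bit more abstract and avoids unpacking the product structure of $K_{\alpha+1}$; the paper's is more hands-on. Both arrive at the same containment $\partial\pi^{\alpha+1}_\alpha[U]\sm\pi^{\alpha+1}_\alpha[\partial U]\sub D_\alpha$.
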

\begin{proof}
Let $\big\langle
K_\alpha,\pi_\alpha^\beta\colon\alpha\le\beta\le\omega_1\big\rangle$
be a de la Vega system. For each $\alpha<\omega_1$ fix
$A_\alpha^0,A_\alpha^1,p_\alpha$ and $G_\alpha$ as in the definition
of the system. We need to show that each $\pi_\alpha^\beta$ is
irreducible and that for each $\alpha<\omega_1$ and closed $U\sub
K_{\alpha+1}$ the set
$\partial\pi_\alpha^{\alpha+1}[U]\sm\pi_\alpha^{\alpha+1}[\partial
U]$ is countable.

\begin{enumerate}
    \item For every $\alpha<\beta\le\omega_1$ the function $\pi_\alpha^\beta$ is irreducible.

Since the system is continuous it is enough to show that $\pi_\alpha^{\alpha+1}$ is irreducible. To prove this we will use the following simple characterization of irreducible mappings: a function $f\colon K\to L$ between two totally disconnected compact spaces is irreducible if and only if for every clopen $U\sub K$ there is clopen $B\sub L$ such that $f^{-1}[B]\sub U$.

Thus fix $\alpha<\omega_1$ and a clopen $U\subset K_{\alpha+1}$. Without loss of generality let us assume $K_\alpha=\Cantor$. Shrinking $U$ if necessary we may assume that $\emptyset\neq U=([s]\times [t])\cap K_{\alpha+1} $ for some $s=\langle s(0),\ldots,s(n)\rangle\in 2^{n+1}$ and
$t=\langle t(g_0),\ldots,t(g_n)\rangle\in 2^{\{g_i\colon\ i\leq n\}}$, where $[s]=\big\{x\in\Cantor\colon\ x\rstr (n+1)=s\big\}$
and $[t]\subset 2^{G_\alpha}$ is defined analogously. Put:
\[W=\bigcap_{i\le n}g_i A_\alpha^{t(g_i)}\cap [s].\]
Since $\pi_\alpha^{\alpha+1}[U]=W$, $W$ is non-empty, and
$W\setminus\big\{g_ip_\alpha\colon\ i\le n\big\}$ is open in $K_\alpha$.
Fix any clopen set $B\subset W\setminus\{g_ip_\alpha\colon i\leq n\}$ and a pair
$(x,\phi)\in B\times 2^{G_\alpha}$ such that $(x,\phi)\in
K_{\alpha+1}$, i.e., $(x,\phi)\in (\pi^{\alpha+1}_\alpha)^{-1}(x)$.
Since $x\not\in\big\{g_ip_\alpha\colon i\leq n\big\}$, for every $i\le n$ there
is a unique $j_i\in 2$ such that $x\in g_i
A_\alpha^{j_i}$, and hence $\phi\big(g_i\big)=t(g_i)=j_i$ for all
$i\le n$. It follows that $(x,\phi)\in [s]\times [t]\cap
K_{\alpha+1}\subset U$, so,
summarizing,$\big(\pi^{\alpha+1}_\alpha\big)^{-1}[B]\subset U$.

    \item For every $\alpha<\omega_1$ and closed $U\sub K_{\alpha+1}$, $\Big|\partial\pi_\alpha^{\alpha+1}[U]\sm\pi_\alpha^{\alpha+1}[\partial U]\Big|\le\omega$.

We shall show that
\[\partial\pi_\alpha^{\alpha+1}[U]\sm\pi_\alpha^{\alpha+1}[\partial U]\sub\big\{gp_\alpha\colon\ g\in G_\alpha\big\}.\]
This has been almost done in the previous paragraph. Indeed,
suppose that
\[x\in\pi^{\alpha+1}_\alpha[U]\setminus\big(\pi^{\alpha+1}_\alpha[\partial U]\cup\{gp_\alpha\colon\ g\in G_\alpha\big\}\big).\]
Then, $x\in\pi^{\alpha+1}_\alpha[U^\circ]\setminus\big\{gp_\alpha\colon\ g\in G_\alpha\big\}$, and, by the same argument as
in (1), we get a clopen set $B\subset K_\alpha$ containing $x$ and such that $\big(\pi^{\alpha+1}_\alpha\big)^{-1}(x)\sub\big(\pi^{\alpha+1}_\alpha\big)^{-1}[B]\sub U^\circ$, and therefore $x\in B\sub\big(\pi^{\alpha+1}_\alpha\big)[U]^\circ$. But this implies that $x\not\in\partial\pi^{\alpha+1}_\alpha[U]$, which completes the proof.
\end{enumerate}
\end{proof}

Recall that a space $X$ is called \textit{rigid} if it has no non-trivial autohomeomorphisms, i.e. if $f\colon X\to X$ is a homeomorphism, then $f$ is the identity map. Combining Theorem \ref{theorem:tau_simple_extensions_fsjnp} with Proposition \ref{prop:delavega_omega_simple} and de la Vega's \cite[Theorems 5.1 and 5.2]{dlV04}, we get the following corollary, important in the view of  \cite[Example 15]{KS18}.

\cordelavegafsjnp

Furthermore, using the generalizations of \cite{dlV05} obtained by
 Back\'e \cite{Back18}, we get the next corollary.

\corbackefsjnp

%

Let us not here that we do not know whether the classes of compact spaces obtained by simple extensions and $\tau$-simple extensions for $\tau\in[\omega,\frakc]$ are essentially different. Thus, we ask the following questions.

\begin{question}\label{ques:tau_simple_ext_simple_ext}\ 
\begin{enumerate}
    \item Does there exist a compact space which is the limit of an inverse system based on $\omega$-simple extensions but is not the limit of any inverse system based on simple extensions?
    \item Assume that $\frakc>\omega_1$. Does there exist a compact space which is the limit of an inverse system based on $\omega_1$-simple extensions but is not the limit of any inverse system based on simple extensions?
\end{enumerate}
\end{question}

The following problem is a special case of Question \ref{ques:tau_simple_ext_simple_ext}.

\begin{question}\label{ques:dlV_simple}
Does there (consistently) exist a de la Vega system whose limit cannot be represented as the limit of an inverse system based on simple extensions?
\end{question}

\subsection*{Acknowledgements}
We would like to thank Grzegorz Plebanek for providing to us many valuable ideas and comments which helped us to obtain results presented in this paper.

The research of the first  named author was supported by the GA\v{C}R project 20-22230L and
RVO: 67985840. The second and third named authors were supported by the Austrian Science Fund FWF, Grants I 2374-N35, I 3709-N35, M 2500-N35.


\normalsize


\begin{thebibliography}{[HD82]}




\normalsize
\baselineskip=17pt




\bibitem{AK06} F.\ Albiac, N.\ Kalton, 
{\em Topics in Banach Space Theory}, 2nd edition,
Graduate Texts in Mathematics 233, Springer, New York, 2016.



\bibitem{Bab77} A.G. Babiker, {\em Uniform regularity of measures on compact spaces}, J. reine, angew. Math. 289 (1977), 188--198.

\bibitem{Back18} J. Back\'{e}, {\em Separable quotients of function spaces and Efimov spaces,}
Master Thesis, Institute of Mathematics, University of Vienna, 2018, accessible online.



\bibitem{BKS19} T. Banakh, J. K\k{a}kol, W. \'Sliwa, {\em Josefson-Nissenzweig property for $C_{p}$-spaces}, RACSAM 113 (2019), 3015--3030.

\bibitem{Beh94} E. Behrends, {\em Lectures on Rosenthal's $\ell^1$-theorem}, Rend. Istit. Mat. Univ. Trieste XXXI (2000), Suppl. 1, 1--36.

\bibitem{Beh95} E. Behrends, {\em New proofs of Rosenthal's $\ell^1$-theorem and the Josefson--Nissenzweig theorem}, Bull. Pol. Acad. Sci., Math. 43 (1995), no. 4, 283--295.






\bibitem{PBN07} P. Borodulin-Nadzieja, {\em Measures on minimally generated Boolean algebras}, Topol. Appl. 154 (2007), 3107--3124.


\bibitem{Bou83} J. Bourgain, {\em $H^\infty$ is a Grothendieck space}, Studia Math. 75 (1983), 193--216.


\bibitem{Bre06} C. Brech, {\em On the density of Banach spaces $C(K)$ with the Grothendieck property}, Proc. Amer. Math. Soc. 134 (2006), no. 12, 3653--3663.

\bibitem{Cem84} P. Cembranos, {\em $C(K, E)$ contains a complemented copy of $c_0$}, Proc. Amer. Math. Soc. 91 (1984), 556--558.


\bibitem{dlV04} R.H. de la Vega, {\em Homogeneity properties in large compact S-spaces}, 2004, preprint, \texttt{arxiv:math/0408339}.

\bibitem{dlV05} R.H. de la Vega, {\em Homogeneity properties on compact spaces}, PhD Thesis, University of Wisconsin-Madison, 2005, 47 pp.


\bibitem{Die73} J. Diestel, {\em Grothendieck spaces and vector measures}, in {\em Vector and operator valued
measures and applications} (Proc. Sympos., Alta, Utah, 1972, 97--108), Academic Press, 1973.

\bibitem{Die84} J. Diestel, {\em Sequences and Series in Banach Spaces}, Springer--Verlag, 1984.



\bibitem{DPM09} A. Dow, R. Pichardo-Mendoza, {\em Efimov Spaces, CH, and Simple Extensions}, Topol. Proc. 33 (2009), 277--283.

\bibitem{DS13} A. Dow, S. Shelah, {\em An Efimov space from Martin’s Axiom}, Houston J. Math. 39 (2013), no. 4, 1423--1435.

\bibitem{DP07} M. D\v{z}amonja, G. Plebanek, {\em On Efimov spaces and Radon measures}, Topol. Appl. 154(2007), no. 10, 2063--2072.

\bibitem{Eng89} R. Engelking, {\em General topology. Revised and completed edition}, Heldermann Verlag, 1989.

\bibitem{Fed76} V.V. Fedorchuk, {\em Completely closed mappings, and the consistency of certain general topology theorems with the axioms of set theory}, Mat. Sb. Novaya Seriya 99 (141) (1976), 1--26 (Russian); English transl.: Math. USSR Sb. 28 (1976), 3--33 and 135.


\bibitem{Fol99} G.B. Folland, {\em Real analysis. Modern techniques and their applications}, 2nd ed., John Wiley \& Sons, Inc., 1999.


\bibitem{Fre89} D.H. Fremlin, {\em Measure algebras}, in: {\em Handbook of Boolean algebras}, Vol. III, Chapter 22, J.D. Monk (ed.), North--Holland, 1989.

\bibitem{Fre84} F.J. Freniche, {\em Barrelledness of the space of vector valued and simple functions}, Math. Ann. 267 (1984), 479--486.



\bibitem{Gro53} A. Grothendieck, {\em Sur les applications lineaires faiblement compactes d'espaces du type $C(K)$}, Canadian J. Math. 5 (1953), 129--173.




\bibitem{Hay81} R. Haydon, {\em A nonreflexive Grothendieck space that does not contain $\ell_\infty$}, Israel J. Math. 40 (1981), no. 1, 65--73.


\bibitem{Jos75} B. Josefson, {\em Weak sequential convergence in the dual of a Banach space does not imply norm convergence}, Ark. Mat. 13 (1975), 79--89.



\bibitem{KSZproc} J. K\k{a}kol, D. Sobota, L. Zdomskyy, {\em On complementability of $c_0$ in spaces $C(K\times L)$}, to appear in Proc. Amer. Math. Soc.

\bibitem{KS18} J. K\k{a}kol, W. \'Sliwa, {\em Efimov spaces and the separable quotient problem for spaces $C_p(K)$}, J. Math. Anal. Appl. 457 (2018), no. 1, 104--113.

\bibitem{Khu78} S.S. Khurana, {\em Grothendieck spaces}, Illinois J. Math. 22 (1978), no. 1, 79--80.


\bibitem{Kop89} S. Koppelberg, {\em Minimally generated Boolean algebras}, Order 5 (1989), no. 4, 393--406.




\bibitem{KP11} M. Krupski, G. Plebanek, {\em A dichotomy for the convex spaces of probability measures}, Topol. Appl. 158 (2011), no. 16, 2184--2190.

\bibitem{KN74} L. Kuipers, H. Niederreiter, {\em Uniform distribution of sequences}, John Wiley \& Sons, 1974.




\bibitem{Los78} V. Losert, {\em On the existence of uniformly distributed sequences in compact topological spaces I}, Trans. Amer. Math. Soc. 246 (1978), 463--471.

\bibitem{Los79} V. Losert, {\em On the existence of uniformly distributed sequences in compact topological spaces II}, Mh. Math. 87 (1979), 247--260.

\bibitem{Mah42} D. Maharam, {\em On homogeneous measure algebras}, Proc. Nat. Acad. Sci. U.S.A. 28 (1942), 108--111.

\bibitem{MS22} W. Marciszewski, D. Sobota, {\em The Josefson--Nissenzweig theorem and filters on $\omega$}, to appear in Arch. Math. Logic.

\bibitem{MSZ} W. Marciszewski, D. Sobota, L. Zdomskyy, {\em On sequences of finitely supported measures related to the Josefson--Nissenzweig theorem}, preprint (2023), \texttt{arxiv:2303.03809}

\bibitem{Mer96} S. Mercourakis, {\em Some remarks on countably determined measures and uniform distribution of sequences}, Mh. Math. 121 (1996), 79--111.




\bibitem{Nis75} A. Nissenzweig, {\em w* sequential convergence}, Israel J. Math. 22 (1975), 266--272.

\bibitem{Pfi94} H. Pfitzner, {\em Weak compactness in the dual of a C$^*$-algebra is determined commutatively}, Math. Ann. 298 (1994), 349--371.

\bibitem{Ple05} G. Plebanek, {\em On Grothendieck spaces}, 2005, an unpublished note.

\bibitem{PS14} G. Plebanek, D. Sobota, {\em Countable tightness in the spaces of regular probability measures}, Fund. Math. 229 (2015), no. 2, 159--169.

\bibitem{Pol82} R. Pol, {\em Note on the spaces of regular probability measures whose topology is determined by countable subsets}, Pacific J. Math. 100 (1982), 185--201.




\bibitem{Sem71} Z. Semadeni, {\em Banach spaces of continuous functions. Volume 1}, PWN---Polish Scientific Publishers, 1971.



\bibitem{SZ19} D. Sobota, L. Zdomskyy, {\em Convergence of measures in forcing extensions}, Israel J. Math 232 (2019), no. 2, 501--529.


\bibitem{Tal80} M. Talagrand, {\em Un nouveau $C(K)$ qui possede la propriete de Grothendieck}, Israel J. Math. 37 (1980), 181--191.

\bibitem{Tal84Pettis} M. Talagrand, {\em Pettis integral and measure theory}, Memoirs Amer. Math. Soc. 307 (1984), vol. 51.








\end{thebibliography}
\end{document}